        \def \F{\mathcal{F}}
\def\N{\mathbb{N}}
\def\diam{\mathrm{diam}}
\def\diam{\mathrm{Diam}}\def\de{\mathcal{DE}^m}\def\xg{(X,G)}
\newcommand{\fs}{\mathcal{FS}^m}
\theoremstyle{plain}
\newtheorem{thm}{Theorem}[section]
\newtheorem{lem}[thm]{Lemma}
\newtheorem{prop}[thm]{Proposition}
\theoremstyle{definition}
\newtheorem{defn}[thm]{Definition}
\newtheorem{rem}[thm]{Remark}
\numberwithin{equation}{section}
\numberwithin{equation}{section}
\renewcommand*{\backref}[1]{}
\renewcommand*{\backrefalt}[4]{\quad \tiny
	\ifcase #1 (\textbf{NOT CITED.})%
	\or    (Cited on Section~#2.)%
	\else   (Cited on Section~#2.)%
	\fi}
\def\MRbibitem{\@ifnextchar[\my@lbibitem\my@bibitem}
\def\mybiblabel#1#2{\@biblabel{{\hyperref{http://www.ams.org/mathscinet-getitem?mr=#1}{}{}{#2}}}}
\def\myhyperanchor#1{\Hy@raisedlink{\hyper@anchorstart{cite.#1}\hyper@anchorend}}
\def\my@lbibitem[#1]#2#3#4\par{%
	\item[\mybiblabel{#2}{#1}\myhyperanchor{#3}\hfill]#4%
	\@ifundefined{ifbackrefparscan}{}{\BR@backref{#3}}%
	\if@filesw{\let\protect\noexpand\immediate
		\write\@auxout{\string\bibcite{#3}{#1}}}\fi\ignorespaces%
}
\def\my@bibitem#1#2#3\par{%
	\refstepcounter\@listctr
	\item[\mybiblabel{#1}{\the\value\@listctr}\myhyperanchor{#2}\hfill]#3%
	\@ifundefined{ifbackrefparscan}{}{\BR@backref{#2}}%
	\if@filesw\immediate\write\@auxout
	{\string\bibcite{#2}{\the\value\@listctr}}\fi\ignorespaces%
}
\newcommand{\tds}{tds\xspace} 
\newcommand{\mef}{MEF\xspace} 
\newcommand{\eq}{\mathrm{eq}} 
\newcommand{\Folner}{F\o lner\xspace} 
\newcommand{\cF}{\mathcal{F}} 
\newcommand{\cQ}{\mathcal{Q}} 
\DeclareMathOperator{\lD}{\underline{{D}}} 
\DeclareMathOperator{\uD}{\overline{\mathrm{D}}} 
\DeclareMathOperator{\lBD}{\underline{\mathrm{BD}}} 
\DeclareMathOperator{\uBD}{\overline{\mathrm{BD}}} 
\DeclareMathOperator{\Id}{Id}
\newcommand{\ie}{i.\,e.\xspace} 
\title[Multivariate diam mean equicontinuity and sensitivity]{A note on
  multivariate diam mean equicontinuity and frequent stability}
\subjclass[2020]{Primary: 37B05.  Secondary: 37A05, 37A15.}
\keywords{Multivariate dynamics, diam mean equicontinuous,  frequent stability}
\author{Lino Haupt}
\address[Lino Haupt]
{Faculty of Mathematics
  and Computer Science, Friedrich Schiller University Jena, Germany}
\email{linojossfidel.haupt@uni-jena.de}
\author{Tobias Jäger} \address[Tobias Jäger] {Faculty of Mathematics
  and Computer Science, Friedrich Schiller University Jena, Germany}
\email{tobias.jaeger@uni-jena.de}
\author{Chunlin Liu}
\address[Chunlin Liu 1]
{School of Mathematical Sciences, Dalian University of Technology, Dalian, 116024, P.R. China.}
\address[Chunlin Liu 2]{Institute of Mathematics, Polish Academy of Sciences, ul. Śniadeckich 8, 00-656 Warszawa, Poland}
\email{chunlinliu@mail.ustc.edu.cn}
\begin{document}
	 \maketitle
	\begin{abstract}
	Let $(X,G)$ be a topological dynamical system, given by the action of a
        is a countable discrete infinite group on a compact metric space $X$. We
        prove that if $(X,G)$ is minimal, then it is either diam-mean
        $m$-equicontinuious or diam-mean $m$-sensitive. Similarly, $(X,G)$ is
        either frequently $m$-stable or strongly $m$-spreading. Further, when
        $G$ is abelian (or, more generally, virtually nilpotent), then the following
        statements are equivalent:
		\begin{enumerate}
			\item $(X,G)$ is a regular $m$-to-one extension of its
                          maximal equicontinuous factor;
			\item $(X,G)$ is diam-mean $(m+1)$-equicontinuious, and
                          not diam mean $m$-equicontinuious;
			\item $(X,G)$ is not diam-mean $(m+1)$-sensitive, but
                          diam mean $m$-sensitive;
			\item $(X,G)$ has an essential weakly mean sensitive
                          $m$-tuple but no essential weakly mean sensitive
                          $(m+1)$-tuple.
		\end{enumerate}
		This provides a {\em \enquote*{local}} characterisation of $m$-regularity and
        mean $m$-sensitivity vial weakly mean sensitive tuples. The same result
        holds when $G$ is amenable and $(X,G)$ satisfies the local Bronstein
        condition.
	\end{abstract}
	\parskip 1pt

\section{Introduction}

Notions of sensitivity and their relation to equicontinuity properties are a
classical subject of study in the field of topological dynamics. A seminal
result in this context is the Auslander-Yorke dichotomy
\cite{AuslanderYorke1980IntervalMaps}, which states that a minimal topological
dynamical system is either equicontinuous or has sensitive dependence on initial
conditions.  Mean versions of sensitivity and equicontinuity were studied in
\cite{LiTuYe2015meanequicontinuous,downarowicz2016isomorphic}, where it is shown
that mean equicontinuous minimal systems can be characterised by the structural
property that the factor map to their maximal equicontinuous factor (\mef) is a
measure-theoretic isomorphism. Dynamics of this type include regular Toeplitz
flows \cite{Williams1984ToeplitzFlows,Downarowicz2005ToeplitzFlows}, regular
model sets
\cite{Schlottmann1999GeneralizedModelSets,BaakeLenzMoody2007Characterization},
Sturmian subshifts or primitive substitutions with discrete spectrum
\cite{BaakeGrimm2013AperiodicOrder}. They are of particular interest in the
context of aperiodic order, where mean equicontinuous dynamics correspond to
strong forms of long-range order in mathematical models of quasicrystals
\cite{BaakeGrimm2013AperiodicOrder}. An Auslander-Yorke type dichotomy for mean
equicontinous systems has been established in
\cite{LiTuYe2015meanequicontinuous}.  It further turned out that some
structurally defined subclasses of mean equicontinuous minimal system can also
be described by equivalent dynamical characterisations. In particular, a minimal
extension of an equicontinuous system is regular, \ie the factor map is almost
surely injective, if and only if it is diam mean equicontinuous, and it is
almost one-to-one, \ie the factor map is injective on a residual set,
if and only if it is frequently stable
\cite{GarciaJaegerYe2021DiamMeanEquicontinuity}.

More recently, multivariate forms of mean equicontinuity and sensitivity have
attracted considerable attention
\cite{ShaoYeZhang2008NSensitivity,LiYeYu2022Equiinthemean,LiYu2021MeanSensitiveTuples,LiuYin2023MeanSensitiveTuplesOfGroupActions,HuangLianShaoYe2021MinimalSystemWithFinitelyManyMeasures}. While
mean equicontinuity corresponds -- as mentioned above -- to a one-to-one
relation between the system and its \mef, the multivariate analogue is
characteristic of finite-to-one extensions of equicontinuous systems
\cite{BreitenbucherHauptJaeger2024FiniteTopomorphicExtensions}. These again
include a variety of paradigmatic examples, such as the Thue-Morse substitution
and its generalisations \cite{BaakeGrimm2013AperiodicOrder}, certain irregular
Toeplitz flows constructred by Williams \cite{Williams1984ToeplitzFlows}, Iwanik
and Lacroix \cite{IwanikLacroix1994NonRegularToeplitz}, and others, or certain
irregular model sets
\cite{FuhrmannGlasnerJaegerOertel2021TameImpliesRegular}. Again, an
Auslander-Yorke dichotomy holds for multivariate mean equicontinuous minimal
systems, as shown in
\cite{BreitenbucherHauptJaeger2024FiniteTopomorphicExtensions}.

The purpose of this note is to complement and extend this ongoing research by
establishing Auslander-Yorke type dichotomies for the multivariate analogues of
diam mean equicontinuity and frequent stability. Moreover, we provide a
localised characterisation of multivariate mean sensivity in terms of weakly
mean sensitive tuples. In order to state our main results, we need to provide
some definitions.
\medskip

We call a pair $(X,G)$ a {\em topological dynamical system} (\tds) if $X$ is a
compact metric space and $G$ is a countably infinite discrete amenable group that acts on
$X$ by homeomorphisms.  With slight abuse of notation we write $g:X\to
X,\ x\mapsto gx$ for $g\in G$.  A \tds is called {\em transitive} if there
exists a point with dense orbit and {\em minimal} if all orbits are dense.
A point \( x \in X \) is called an \emph{almost periodic point} (or a \emph{minimal point}) if the closure of its orbit is a minimal system.

 It is
well-known that any \tds has a {\em maximal equicontinuous factor (\mef)}
\cite{A,Downarowicz2005ToeplitzFlows}, which we denote by $(X_{\eq},G)$. The
corresponding factor map will be denoted by $\pi_{\eq}:X\to X_{\eq}$. When
$(X,G)$ is transitive, its \mef is strictly ergodic with a unique invariant
Borel probability measure $\mu_{\eq}$. We call $(X,G)$ {\em $m$-regular} if
$\mu_{\eq}\left(\left\{y\in X_{\eq}\mid \sharp
\pi_{\eq}^{-1}(y)=m\right\}\right)=1$ and {\em almost $m$:1} if $\left\{y\in
X_{\eq}\mid \sharp \pi_{\eq}^{-1}(y)=m\right\}$ is residual.  Given $m\in\N$, we
define the {\em $m$-separation} of $m$ points $x_1,\ldots, x_m\in X$ as
\[
D_m(x_1,\ldots x_m)\ = \ \min_{1\leq i < j \leq m} d(x_i,x_j)\ ,
\]
where $d$ denotes the metric on $X$. Given a subset $A\subseteq X$, the
$m$-diameter of $A$ is given by
\[
\diam_m(A) \ = \ \sup\left\{\left. D_m(x_1,\ldots,x_m)\ \right|
\ (x_1,\ldots,x_m)\in A^m\right\} \ .
\]
A \tds $(X,G)$ is said to be {\em diam-mean $m$-equicontinuous} if for any
$\varepsilon>0$ there exists $\delta>0$ such that for all sets $U\subset X$ with
$\diam_m(U)<\delta$ we have
\begin{equation}\label{e.m_diam_mean_equi}
 \sup_{\cF}\limsup_{n\to\infty}\frac{1}{|\cF_n|}\sum_{g\in
   \cF_n}\operatorname{Diam}_m(g(U)) < \ \varepsilon \ .
\end{equation}
Here and in all of the following $\sup_\F$ is the supremum taken over all
\Folner sequences $\cF$ of $G$.  Furthermore, we understand $\cF_n$ to be the
$n$-th element of $\cF = (\cF_n)_{n \in \N}$.  The significance of this
dynamical property of diam-mean $m$-equicontinuity lies in the fact that --- at
least in the case of minimal abelian group actions --- a \tds is $m$-regular if
and only if it is diam mean $(m+1)$-equicontinuous, but not diam mean
$m$-equicontinuous \cite{Lino2025}.\footnote {
		\label{footnote:atmostvsexact}
		There is a slight mismatch between the notation here and in
                \cite{Lino2025} that might cause confusion.  In \cite{Lino2025}
                an \enquote{almost surely $m$-to-1} extension is a factor map
                where almost all points have \emph{at most} $m$ preimages.  Here
                an \enquote{$m$-regular} extension is a factor map where almost
                all points have \emph{exactly} $m$ preimages.  } We will obtain
the same result under more general conditions on the group --- including the
nilponent case --- as a byproduct in Section~\ref{WeaklyMeanSensitiveTuples}
(Theorem~\ref{thm:m to 1<=> tuple}).



A point $x\in X$ is said to be a {\em diam-mean $m$-equicontinuity point} if for
any $\varepsilon>0$, there exists $\delta>0$ such that (\ref{e.m_diam_mean_equi})
holds for all open neighbourhoods $U$ of $x$ with $\diam_m(U)<\delta$. It is a
direct consequence of the definitions and compactness that $(X,G)$ is diam-mean
equicontinuous if and only if all $x\in X$ are diam-mean equicontinuity
points. $\xg$ is called {\em almost diam-mean $m$-equicontinuous} if the set of diam-mean
$m$-equicontinuous points is a residual subset of $X$.  Conversely, a \tds
$(X,G)$ is said to be {\em diam-mean $m$-sensitive} if there exists $\varepsilon>0$
such that for any nonempty open subset $U\subset X$
\begin{equation}\label{e.diam_mean_sensitivity}
\sup_{\cF}\limsup_{n\to\infty}\frac{1}{|\cF_n|}\sum_{g\in
  \cF_n}\operatorname{Diam}_m(g(U)) \ \geq\ \varepsilon \ .
\end{equation}
A point $x\in X$ is said to be {\em diam-mean $m$-sensitive} if there exists
$\varepsilon>0$ such that (\ref{e.diam_mean_sensitivity}) holds for any open
neighbourhood $U$ of $x$.  Our first main result is an Auslander-Yorke type
dichotomy for these notions.

\begin{thm} \label{t.Auslander_Yorke_dichotomy_diam_mean}
  Suppose that $(X,G)$ is a tds and $m\in\N$.
  \begin{itemize}
	  \item[(i)]
		  If $(X,G)$ is transitive, then it is either
		  almost diam-mean $m$-equicontinuous or diam-mean $m$-sensitive.
	  \item[(ii)]
		  If $(X,G)$ is minimal, then it is either
		  diam-mean $m$-equicontinuous or diam-mean $m$-sensitive.
  \end{itemize}
\end{thm}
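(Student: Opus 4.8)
The plan is to run the usual Auslander--Yorke scheme through an auxiliary \enquote{modulus of diam-mean $m$-equicontinuity}. For an open set $U\subseteq X$ write
$$
s(U)\ :=\ \sup_{\cF}\limsup_{n\to\infty}\frac{1}{|\cF_n|}\sum_{g\in\cF_n}\operatorname{Diam}_m(g(U))\ ,
$$
so that $s$ is monotone under inclusion and $G$-invariant, $s(gU)=s(U)$: indeed $\operatorname{Diam}_m(h\cdot gU)=\operatorname{Diam}_m((hg)U)$ and right-translating each $\cF_n$ again produces a \Folner sequence, so $\cF\mapsto(\cF_n g)_n$ permutes the \Folner sequences of $G$. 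For $x\in X$ set
$$
\varphi(x)\ :=\ \inf_{\delta>0}\ \sup\bigl\{\,s(U)\ :\ U\ni x\ \text{open},\ \diam_m(U)<\delta\,\bigr\}\ .
$$
By the definitions $x$ is a diam-mean $m$-equicontinuity point iff $\varphi(x)=0$, and by the compactness remark in the introduction $(X,G)$ is diam-mean $m$-equicontinuous iff $\varphi\equiv 0$. The combinatorial input is that a set $U\ni x$ with $\diam_m(U)<\delta$ contains no $\delta$-separated $m$-tuple, hence is covered by at most $m-1$ open $\delta$-balls, one of which may be taken centred at $x$; writing $z_1:=x$, monotonicity of $s$ then gives the squeeze
$$
\sup_{z_2,\dots,z_{m-1}}s\bigl(\textstyle\bigcup_{j=1}^{m-1}B(z_j,\delta/3)\bigr)\ \le\ \sup\{s(U):U\ni x,\ \diam_m(U)<\delta\}\ \le\ \sup_{z_2,\dots,z_{m-1}}s\bigl(\textstyle\bigcup_{j=1}^{m-1}B(z_j,\delta)\bigr)\ .
$$
From the right-hand description together with monotonicity of $s$ one checks that $\varphi$ is upper semicontinuous, and from uniform continuity of the homeomorphisms $g\in G$ together with $G$-invariance of $s$ that $\varphi$ is $G$-invariant.

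Consequently $E:=\{x:\varphi(x)=0\}=\bigcap_{k\in\N}\{\varphi<1/k\}$ is a $G$-invariant $G_\delta$ set, and each $\{\varphi<1/k\}$ is open and $G$-invariant. For (i): if $(X,G)$ is transitive, every nonempty open $G$-invariant set is dense, since it meets, and being $G$-invariant then contains, the dense orbit of a transitive point; hence each $\{\varphi<1/k\}$ is empty or dense. If all of them are nonempty then $E$ is residual, i.e.\ $(X,G)$ is almost diam-mean $m$-equicontinuous. If some $\{\varphi<1/k_0\}$ is empty then $\varphi\ge\varepsilon_0:=1/k_0$ on all of $X$, and it remains to deduce diam-mean $m$-sensitivity. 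For (ii): by (i) a minimal system is either diam-mean $m$-sensitive or almost diam-mean $m$-equicontinuous; in the second case $E\neq\emptyset$, so $\varphi$ vanishes somewhere, hence for every $c>0$ the closed $G$-invariant set $\{\varphi\ge c\}$ is a proper subset of $X$ and therefore empty by minimality, so $\varphi\equiv 0$ and $(X,G)$ is diam-mean $m$-equicontinuous. In both parts the alternatives are mutually exclusive: diam-mean $m$-sensitivity with constant $\varepsilon$ forces $s(U)\ge\varepsilon>0$ for every nonempty open $U$, in particular for every neighbourhood of any point, so no point is an equicontinuity point.

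It remains to justify the claim used in (i): $\varphi\ge\varepsilon_0>0$ everywhere implies that $(X,G)$ is diam-mean $m$-sensitive. When $m=2$ this is immediate, because $\diam_2=\diam$ forces every competing $U$ into a single ball, so $\varphi(x)=\lim_{\delta\to0}s(B(x,\delta))$; then for a nonempty open $W$ and $x\in W$ one picks $\delta$ with $B(x,\delta)\subseteq W$ and obtains $s(W)\ge s(B(x,\delta))\ge\varphi(x)\ge\varepsilon_0$, so $\varepsilon_0$ is a sensitivity constant (this is essentially the argument behind the $m=2$ dichotomy). For $m\ge 3$, however, the \enquote{satellite} balls $B(z_2,\delta),\dots,B(z_{m-1},\delta)$ entering $\varphi$ may lie far from $x$, so one cannot simply shrink a large-$s$ neighbourhood of $x$ into a prescribed $W$; one has instead to show that a nonempty open set cannot have small $s$ without forcing an honest diam-mean $m$-equicontinuity point somewhere, using transitivity to spread a small-$s$ ball around the orbit and then analysing how the at most $m-1$ clusters constituting a small-$\diam_m$ set are transported by the dynamics, presumably via an induction on $m$ that peels off one cluster at a time.

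\emph{This last step is the main obstacle}, and it is the only place where the particular value of $m$ enters in a nontrivial way; everything else --- the monotonicity and $G$-invariance of $s$, the upper semicontinuity and $G$-invariance of $\varphi$, the $G_\delta$/invariance bookkeeping for $E$, and the reduction of (ii) to (i) --- is routine.
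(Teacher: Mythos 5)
There is a genuine gap, and it sits exactly where you flag it: the implication \enquote{$\varphi\ge\varepsilon_0$ everywhere $\Rightarrow$ $(X,G)$ is diam-mean $m$-sensitive} is never proved, only gestured at (\enquote{presumably via an induction on $m$ that peels off one cluster at a time}). This is not a routine detail one can defer --- it \emph{is} the sensitivity half of the dichotomy, and your own analysis shows why your setup makes it hard: for $m\ge 3$ a set $U\ni x$ with small $\diam_m(U)$ witnessing $\varphi(x)\ge\varepsilon_0$ may consist of up to $m-1$ clusters, of which only one is near $x$, so it cannot be pushed into a prescribed open set $W$ by transitivity, and hence the lower bound on $\varphi$ gives no lower bound on $s(W)$. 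Everything before that point (monotonicity and $G$-invariance of $s$, upper semicontinuity and invariance of $\varphi$, the $G_\delta$ bookkeeping, the reduction of (ii) to (i)) is fine, but without the missing step the theorem is not proved.

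The paper never meets this obstacle because it runs the Auslander--Yorke scheme with the \emph{ball-based} pointwise quantity: in Section 3.2 the set $\de_\varepsilon$ is defined by the existence of $\delta>0$ with $s(B(x,\delta))<\varepsilon$, i.e.\ with your notation the relevant modulus is $\psi(x):=\inf_{\delta>0}s(B(x,\delta))$ rather than $\varphi$. Its sublevel sets are open (the $B(y,\delta/2)\subset B(x,\delta)$ trick) and invariant (right-translated \Folner sequences plus the \Folner property, which is Proposition \ref{prop:gdeltaandinvariancediammean}), the complement $(\de_\varepsilon)^c$ is exactly the set of diam-mean $\varepsilon$-$m$-sensitivity points, and the dichotomy follows by propagating these sets along the dense orbit of a transitive point (Proposition \ref{prop:auslanderyorkediammean}): if the transitive point lies in some $(\de_\varepsilon)^c$, then $X=(\de_\varepsilon)^c$, and since every nonempty open $W$ contains a ball around one of its points, monotonicity of $s$ gives $s(W)\ge\varepsilon$ --- no satellite clusters ever appear, because system-level sensitivity only requires lower bounds on $s$ over balls. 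So the fix is to use $\psi$ (equivalently the paper's $\de_\varepsilon$) in the sensitivity branch. Be aware, though, of a mismatch your proposal makes visible: your positive branch yields a residual set of equicontinuity points in the \emph{full} sense of the introduction (all $U\ni x$ with $\diam_m(U)<\delta$), whereas the paper's argument yields residualness of the a priori larger ball-based set $\de$; if you want the dichotomy with the full pointwise notion on the equicontinuity side \emph{and} a complete sensitivity argument, you would have to relate $\varphi$ and $\psi$, which is precisely the nontrivial content your sketch leaves open and which the paper's proof does not attempt.
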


A weaker notion than that of diam mean equicontinuity is that of frequent stability.
We call $(X,G)$ {\em frequently $m$-stable} if for all $\varepsilon>0$ there
exists $\delta>0$ such that
\begin{equation}\label{e.frequent_stability}
	\uBD\left(\left\{ g\in G\mid \diam_m(gA) < \varepsilon\right\}\right) \ > \ 0
\end{equation}
holds for all sets $A\subseteq X$ with $\diam(A)<\delta$, where $\uBD(S)$
denotes the upper Banach density of a set $S\subseteq G$. Conversely, we call
$(X,G)$ {\em strongly $m$-spreading} if there exists $\varepsilon>0$ such that
\begin{equation}\label{e.strong_spreading}
\uBD\left(\left\{ g\in G\mid \diam_m(gU) < \varepsilon\right\}\right) \ = \ 0
\end{equation}
holds for all open sets $U\subseteq X$. Similar as for diam mean equicontinuity,
frequent stability is related to a structural property of \tds: in the case of
minimal abelian group actions, a \tds is an almost $m$:1-extension of its \mef if
and only if it is frequently $(m+1)$-stable, but not frequently
$m$-stable \cite{Lino2025}.\footnote
	{
		\label{footnote:atmostvsexact2}
		As explained in Footnote \ref{footnote:atmostvsexact}, there is a mismatch between the notation here and in \cite{Lino2025}:
		In \cite{Lino2025} an \enquote{almost $m$-to-1} extension is a factor map
		where a residual set of points have \emph{at most} $m$ preimages.
		In our notation here, an \enquote{almost $m$:1} extension is a factor map
		where a residual set of points have \emph{exactly} $m$ preimages.
	}

We again consider pointwise verions of these properties:
An element $x\in X$ is called a {\em frequent $m$-stability point}
if for all $\varepsilon>0$ there exists
$\delta>0$ such that (\ref{e.frequent_stability}) holds for all $A\subseteq X$
with $x\in A$ and $\diam(A)<\delta$.
We say $x$ is a {\em strong $m$-spreading point}
if there exists $\varepsilon>0$ such that (\ref{e.strong_spreading}) holds
for all open neighbourhoods of $x$. 
If $x$ is strongly $m$-spreading with respect to a given $\varepsilon > 0$ we will call $x$
a \emph{strong $\varepsilon$-$m$-spreading point}.
Finally, we call $(X,G)$ {\em almost
  frequently $m$-stable} if the set of frequently $m$-stable points is a
residual subset of $X$. As before, we obtain an Auslander-Yorke type dichotomy.

\begin{thm} \label{t.Auslander_Yorke_dichotomy_frequent_stability}
  Suppose that $(X,G)$ is a tds and $m\in\N$. Then the
  following hold.
  \begin{itemize}
  \item[(i)]
	  If $(X,G)$ is transitive, then it is either
	  almost frequently $m$-stable or strongly $m$-spreading.
  \item[(ii)] 
	  If $(X,G)$ is minimal, then it is either
	  frequently $m$-stable or strongly $m$-spreading.
  \end{itemize}
\end{thm}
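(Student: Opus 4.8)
The plan is to run the same scheme as in the proof of Theorem~\ref{t.Auslander_Yorke_dichotomy_diam_mean}, replacing the \Folner average of the $m$-diameter by $\uBD(\{g\in G:\diam_m(gU)<\varepsilon\})$ throughout. Two soft facts underlie everything: $\diam_m$ is monotone under inclusion ($A\subseteq A'\Rightarrow\diam_m(A)\le\diam_m(A')$), and $\uBD$ is invariant under right translations of $G$, since a right translate of a \Folner sequence is again \Folner. Writing $B_\delta(x)$ for the open $\delta$-ball around $x$, the first step is to use monotonicity together with the inclusions $A\subseteq B_{\diam(A)}(x)$ (valid whenever $x\in A$) and $B_{\delta-d(x,x')}(x')\subseteq B_\delta(x)$ to show that $x$ is a frequent $m$-stability point if and only if for every $\varepsilon>0$ there is $\delta>0$ with $\uBD(\{g:\diam_m(gB_\delta(x))<\varepsilon\})>0$, and, dually, that $x$ is a strong $m$-spreading point if and only if there is $\varepsilon>0$ with $\uBD(\{g:\diam_m(gB_\delta(x))<\varepsilon\})=0$ for all $\delta>0$; in particular the two pointwise notions are exact negations of each other. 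For $k\in\N$ I would then set
\[
E_k \ = \ \bigl\{x\in X : \exists\,\delta>0 \text{ with } \uBD(\{g:\diam_m(gB_\delta(x))<1/k\})>0\bigr\},
\]
so that the set $E$ of frequent $m$-stability points is $\bigcap_kE_k$; the inclusions above also show each $E_k$ is open, and the fact that each $g\in G$ is a homeomorphism together with right-translation invariance of $\uBD$ shows each $E_k$ is $G$-invariant.

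For part (i) I would distinguish two cases. If some $E_k$ is empty, then every point of $X$ is a strong $1/k$-$m$-spreading point, and for any nonempty open $U$, choosing $x\in U$ and $\delta>0$ with $B_\delta(x)\subseteq U$ gives $\uBD(\{g:\diam_m(gU)<1/k\})\le\uBD(\{g:\diam_m(gB_\delta(x))<1/k\})=0$, so $(X,G)$ is strongly $m$-spreading. If instead every $E_k$ is nonempty, then each is a nonempty open $G$-invariant subset of a transitive system and is therefore dense (it contains the orbit of a transitive point), so $E=\bigcap_kE_k$ is residual by the Baire category theorem and $(X,G)$ is almost frequently $m$-stable. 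The two alternatives are incompatible, because a frequent $m$-stability point $x$ would give $\uBD(\{g:\diam_m(gB_\delta(x))<\varepsilon_0\})>0$ for the $\varepsilon_0$ witnessing strong $m$-spreading. This settles (i).

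For part (ii), minimality implies transitivity, so by (i) it suffices to treat the case where $(X,G)$ is almost frequently $m$-stable; fix a frequent $m$-stability point $x_0$. Given $y\in X$ and $\varepsilon>0$, I would use the ball characterization to pick $\delta_0>0$ with $\uBD(\{g:\diam_m(gB_{\delta_0}(x_0))<\varepsilon\})>0$, then use minimality to pick $h\in G$ with $hy$ sufficiently close to $x_0$ and, by continuity of $h$ at $y$, a $\delta_y>0$ with $hB_{\delta_y}(y)\subseteq B_{\delta_0}(x_0)$. Monotonicity of $\diam_m$ gives $\{g:\diam_m(gB_{\delta_0}(x_0))<\varepsilon\}\subseteq\{g:\diam_m((gh)B_{\delta_y}(y))<\varepsilon\}$, and the right-hand side is a right translate of $\{g:\diam_m(gB_{\delta_y}(y))<\varepsilon\}$, so right-translation invariance of $\uBD$ yields $\uBD(\{g:\diam_m(gB_{\delta_y}(y))<\varepsilon\})>0$; thus every $y\in X$ is a frequent $m$-stability point. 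A compactness argument then upgrades this to uniformity: fixing $\varepsilon>0$ and $\delta_y$ as above for each $y$, extracting a finite subcover $B_{\delta_{y_1}/3}(y_1),\dots,B_{\delta_{y_N}/3}(y_N)$ of $X$ and putting $\delta=\min_i\delta_{y_i}/3$, any nonempty $A\subseteq X$ with $\diam(A)<\delta$ lies in $B_{\delta_{y_i}}(y_i)$ for some $i$, so $\uBD(\{g:\diam_m(gA)<\varepsilon\})\ge\uBD(\{g:\diam_m(gB_{\delta_{y_i}}(y_i))<\varepsilon\})>0$. Hence $(X,G)$ is frequently $m$-stable, which together with the strongly $m$-spreading alternative proves (ii).

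The step I expect to be the main obstacle is the passage from ``almost'' to ``uniform'' in part (ii): spreading a single frequent stability point over $X$ via minimality only yields a pointwise statement, because the continuity modulus of the translating element $h$ is not uniform in $y$, and uniformity must be recovered afterwards by the separate compactness argument. Keeping the nested choices of $\delta$ mutually consistent and applying the right-translation invariance of $\uBD$ at the correct places is the bulk of the technical work; the rest is soft and parallels the proof of Theorem~\ref{t.Auslander_Yorke_dichotomy_diam_mean}.
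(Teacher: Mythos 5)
Correct, and essentially the paper's own argument: your sets $E_k$ are exactly the paper's $\fs_{1/k}$, and the dichotomy is obtained from the same two facts (openness and $G$-invariance of these sets) combined with transitivity, the Baire category theorem and, in the minimal case, density of every orbit. The only differences are organisational: you split case (i) by whether some $E_k$ is empty rather than by whether a fixed transitive point lies in $\fs$, and in case (ii) you make explicit the compactness step upgrading the pointwise statement that every point is a frequent $m$-stability point to the uniform definition of frequent $m$-stability, a step the paper leaves implicit when it identifies frequent $m$-stability with $\fs=X$.
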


 Our second main objective is to provide {\em \enquote*{local}} characterisations of the
 above sensitivity properties. In spirit, this is inspired by the
 characterisation of positive entropy via entropy tuples introduced in
 \cite{Blanchard1993DisjointnessTheoremForTopologicalEntropy,BlanchardHostMaassMartinez1995EntropyPairsForMeasure,BlanchardGlasnerHost1997VariationalPrincipleAndEntropyPairs}. Sensitive
 tuples were first defined in \cite{ShaoYeZhang2008NSensitivity} to study
 sensitivity in topological dynamics. In the context of mean sensitivity, the
 notion of mean sensitive tuples were studied
 \cite{LiYeYu2022Equiinthemean}. Here, we provide analogue results for diam mean
 equicontinuity. 

 We say a $K$-tuple $ (x_k)_{k=1}^K\in X^K$ is a {\em weakly mean-sensitive
   $K$-tuple} if for any open neighborhood $U_k$ of $x_k$ there exists $\eta>0$
 such that for any non-empty open subset $U$ of $X$,
 \[
 \uBD\left(\left\{g\in G: (gU)\cap U_k\neq\emptyset\text{ for }1\le k\le
 K\right\}\right)\ > \ \eta \ .
 \]
 A tuple $x\in X^K$ is called {\em essential} if $x_i\neq x_j$ for all $i\neq
 j\in\{1,\ldots,K\}$.

 \begin{thm}
   Suppose that $(X,G)$ is a transitive \tds and $m\in\N$. Then $(X,G)$ is diam
   mean $m$-sensitive if and only if there exists an essential weakly
   $m$-sensitive tuple.
 \end{thm}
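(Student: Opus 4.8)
The plan is to prove the two implications separately; producing a tuple out of diam mean $m$-sensitivity is the real work. For the easy direction, take an essential weakly mean-sensitive $m$-tuple $(x_1,\dots,x_m)$. Since the coordinates are pairwise distinct, one can fix pairwise disjoint open neighbourhoods $U_k\ni x_k$ and a constant $\varepsilon_0>0$ small enough that $D_m(z_1,\dots,z_m)\ge\varepsilon_0$ whenever $z_k\in U_k$ for each $k$; let $\eta>0$ be the constant furnished by weak mean sensitivity for these neighbourhoods. Then for every non-empty open $U\subseteq X$ the set $N_U=\{g\in G:\ gU\cap U_k\neq\emptyset\text{ for }1\le k\le m\}$ has $\uBD(N_U)>\eta$, and every $g\in N_U$ yields a point of $(gU)^m$ of $m$-separation at least $\varepsilon_0$, so $\operatorname{Diam}_m(gU)\ge\varepsilon_0$. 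Picking a \Folner sequence $\cF$ that witnesses $\uBD(N_U)>\eta$ and averaging, one gets $\sup_\cF\limsup_{n\to\infty}\frac1{|\cF_n|}\sum_{g\in\cF_n}\operatorname{Diam}_m(gU)\ge\varepsilon_0\eta$; since $\varepsilon_0$ and $\eta$ do not depend on $U$, this is diam mean $m$-sensitivity.

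For the converse, let $\varepsilon>0$ be a sensitivity constant, $M=\operatorname{diam}(X)$, and fix a countable base of open sets. Markov's inequality applied to the averages in \eqref{e.diam_mean_sensitivity} shows that for every non-empty open $V$ the set $S_V=\{g:\operatorname{Diam}_m(gV)\ge\varepsilon/2\}$ has $\uBD(S_V)\ge\eta_0$ for a fixed $\eta_0=\eta_0(\varepsilon,M)>0$. Fix once and for all a finite cover of $X$ by open balls of radius $<\varepsilon/100$. If $g\in S_V$ then $(gV)^m$ contains a tuple of $m$-separation $>\varepsilon/3$, so the $m$ cover-balls meeting its coordinates are distinct and pairwise at distance $>\varepsilon/4$; colouring $g$ by this $m$-element set of ball indices and pigeonholing over $g$, one gets for each basic open $V$ an ``admissible'' index set $I_V$ with $\uBD(\{g\in S_V:\ gV\ \text{meets each of the}\ m\ \text{balls indexed by}\ I_V\})\ge\eta_1:=\eta_0/(\#\{\text{admissible index sets}\})$. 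Now run a \emph{peeling} argument on the countably many basic open sets coloured by $I_V$: if on every non-empty open set and for every admissible index set there were a smaller non-empty open set containing no basic set of that colour, then peeling off the finitely many colours one after another would leave a non-empty open set containing no basic open set at all — absurd. Hence there are a non-empty open $O^*$ and a configuration of cover-balls $W_1^0,\dots,W_m^0$ with pairwise disjoint closures such that every non-empty open $U\subseteq O^*$ contains a basic $V$ with $\uBD(\{g:\ gV\cap W_k^0\neq\emptyset\ \forall k\})\ge\eta_1$. Transitivity upgrades this to \emph{all} non-empty open $U$: given $U$, pick $h\in G$ with $hO^*\cap U\neq\emptyset$, choose a basic $V'\subseteq U\cap hO^*$, apply the previous statement to $h^{-1}V'\subseteq O^*$, and transport the resulting density along the inclusion $hV\subseteq V'$, using that $\uBD$ is invariant under right translation.

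It remains to shrink the configuration to a point. Suppose that for some $j$ we have open sets $W_1^j,\dots,W_m^j$ with pairwise disjoint closures and diameters $<r_j$ (where $r_j\downarrow0$) satisfying $(\star_j)$: every non-empty open $U$ contains a basic $V$ with $\uBD(\{g:\ gV\cap W_k^j\neq\emptyset\ \forall k\})\ge\eta^{(j)}>0$. Cover each $\overline{W_k^j}$ by finitely many open balls of diameter $<r_{j+1}$; for each good $V$ and each $g$ in its witnessing set pick $y_k\in V$ with $gy_k\in W_k^j$ and record the sub-ball containing $gy_k$; pigeonholing over $g$ and then rerunning the peeling argument and the transitivity upgrade yields sub-balls $W_k^{j+1}\subseteq W_k^j$ of diameter $<r_{j+1}$, still met by a set of density $\ge\eta^{(j+1)}>0$, satisfying $(\star_{j+1})$. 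Setting $x_k=\bigcap_j\overline{W_k^j}$ gives a single point in each coordinate (nested compacta, diameters $\to0$), and $(x_1,\dots,x_m)$ is essential because the closures stay pairwise disjoint. Finally, given neighbourhoods $U_k\ni x_k$, choose $j$ with $\overline{W_k^j}\subseteq U_k$ for all $k$: then $(\star_j)$ gives, for every non-empty open $U$, a basic $V\subseteq U$ with $\uBD(\{g:\ gU\cap U_k\neq\emptyset\ \forall k\})\ge\uBD(\{g:\ gV\cap W_k^j\neq\emptyset\ \forall k\})\ge\eta^{(j)}$, so $(x_1,\dots,x_m)$ is a weakly mean-sensitive $m$-tuple (with any constant $\eta<\eta^{(j)}$, which is allowed to depend on the neighbourhoods).

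The delicate point is the converse, where one must control all open sets $U$ simultaneously although the natural pigeonhole only fixes a ball configuration per basic set $V$; the peeling argument concentrates a single consistent configuration on a non-empty open region $O^*$, and transitivity then spreads it back over $X$. For this transport to be legitimate one uses the right-translation invariance of $\uBD$ and the fact that $\uBD(S)\ge c$ is realised along some \Folner sequence. The other point requiring care is keeping the configurations nested with vanishing diameter — so each refinement step shrinks inside the current balls rather than re-covering $X$ — which is what makes the limit a genuine point; the gradual loss of density along the iteration (so that $\eta^{(j)}$ may tend to $0$) is harmless, since the definition of a weakly mean-sensitive tuple allows its constant to depend on the chosen neighbourhoods.
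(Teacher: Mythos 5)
Your proof is correct, and the forward direction is essentially the paper's argument verbatim (disjoint neighbourhoods, separation constant times the density constant). The converse, however, is organised differently from the paper's proof, and it is worth comparing the two. The paper also starts from the Markov-type observation that every open $U$ has a set of times of upper Banach density $\geq \varepsilon$ along which $\diam_m(gU)\geq\varepsilon$, but it then restricts attention from the outset to the monotone family $W_n=B(x,1/n)$ of neighbourhoods of a single transitive point and works in the product space $X^m$: it covers the compact set $X^m_\varepsilon$ by finitely many sets of small diameter and pigeonholes, using the decreasingness of the $W_n$ (a colour that works for $W_{n_0}$ automatically works for all larger $W_n$) to fix one colour uniformly in $n$; iterating with nested covers produces a nested sequence of open sets $A^l\subseteq X^m$ with $\diam(A^l)<1/l$ whose intersection with $X^m_\varepsilon$ is the desired tuple, and transitivity together with right-invariance of $\uBD$ is invoked only once, at the very end, to pass from $W_n$ to an arbitrary open $U$. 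You instead keep all basic open sets in play, colour each one by a configuration of balls from a finite cover of $X$, and therefore need the extra peeling argument over the finitely many colours to localise a single configuration on some open region $O^*$, followed by a transitivity upgrade; and you must repeat this peeling-plus-upgrade at every refinement scale, building the tuple as an intersection of nested balls in $X$ coordinatewise rather than of nested open sets in $X^m$. Both routes are sound (your peeling step and the right-translation invariance of $\uBD$ are used correctly, and the loss of density at each scale is harmless for exactly the reason you state); the paper's choice of the monotone family $W_n$ is what lets it avoid your peeling and the repeated use of transitivity, so its proof is leaner, while yours does not privilege the transitive point during the combinatorial stage. One small point to make explicit in a write-up: the balls covering $\overline{W_k^j}$ need not lie inside $W_k^j$, so you should take $W_k^{j+1}$ to be the intersection of the selected sub-ball with $W_k^j$ (as your closing remark implicitly does) to guarantee the nesting that makes the limit a single point in each coordinate.
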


 A similar localised characterisation of strong $m$-spreading is more intricate,
 since it is difficult to ensure the required spreading property along a subset
 $S\subseteq G$ of full density with a single tuple. However, we provide an
 equivalent condition for strong $m$-spreading in terms of finite families of
 weakly $m$-sensitive tuples in Section~\ref{Strong_m_spreading} (see
 Prop.~\ref{p.strong_spreading_via_tuples}).
 \medskip

 \noindent
{\bf Structure of the article.}\quad In Section~\ref{Prelim}, we provide the
necessary background and preliminaries. The Auslander-Yorke dichotomies for diam
mean $m$-sensitivity and $m$-equicontinuity as well as for frequent
$m$-stability and strong $m$-spreading are proven in
Section~\ref{AuslanderYorke} and Section \ref{sec:4}. Section~\ref{WeaklyMeanSensitiveTuples} then
contains the characterisation of these notions in terms of weakly mean
$m$-sensitive tuples.

\medskip

\noindent
{\bf Acknowledgements.} LH and TJ were supported by grant 552775134 of the
German Research Council.

\section{Preliminaries} \label{Prelim}
\subsection{Notation}
Let $A$ and $B$ be sets.
By $|A|$ we denote the cardinality of $A$.
The symmetric difference of $A$ and $B$ shall be written as $A \triangle B$.
If $A, B \subseteq G$ for some group $G$, then we let
\[ A B := \left\{ a b \left\vert a \in A, b \in B \right. \right\} \,. \]

If $(X,d)$ is a metric space, we will denote the open ball around $x \in X$ with radius $\delta$ by
\[ B(x,\delta) := \{ x' \in X \mid d(x,x') < \delta \} \,.\]
Note that we suppress the metric and space notationally.
In particular this will mean, that $B(x,\delta)$ might be a ball in the original \tds and $B(\pi(x),\varepsilon)$ a ball in the \mef.

\subsection{\Folner Sequences and Densities}
Let $G$ be an countably infinite and discrete group.
\begin{defn}[\Folner Sequence]
	A sequence $\mathcal{F} = (\cF_n)_{n\in\N}$ of finite subsets $\cF_n \subseteq G$ is called \emph{\Folner} if
	for any finite $K \subseteq G$ we have
	\[ \lim_{n\to\infty} \frac{| \cF_n \triangle K \cF_n |}{|\cF_n|} = 0 \,. \]
\end{defn}
\begin{defn}[Amenability]
	We call $G$ \emph{amenable} if at least one \Folner sequence exists.
\end{defn}

From now on, we assume $G$ to be amenable.  Using \Folner sequences one can
define a variety of invariant means on the group $G$. The most simple of these
are densities.

\begin{defn}[Densities]
	Let $A \subseteq G$ be arbitrary and fix a \Folner sequence $\cF$.
	We define the \emph{upper density} of $A$ as
	\[ \uD_\F(A) := \limsup_{n\to\infty} \frac{|A  \cap \cF_n|}{|\cF_n|} \,.\]

	Similarly, we define the \emph{lower density} of $A$ as
	\[ \lD_\F(A) := \liminf_{n\to\infty} \frac{|A  \cap \cF_n|}{|\cF_n|} \,.\]
\end{defn}

Often we want to talk about all \Folner sequences at once.
We therefore define
\begin{defn}[Banach Densities]
	Let $A \subseteq G$ be arbitrary.
	We define the \emph{upper Banach density} of $A$ as
	\[ \uBD(A) := \sup_{\cF} \limsup_{n\to\infty} \frac{|A  \cap \cF_n|}{|\cF_n|} \,,\]
	where $\sup_{\cF}$ --- as always --- is the supremum taken over all \Folner sequences.

	Similarly, we define the \emph{lower Banach density} of $A$ as
	\[ \lBD(A) := \inf_{\cF} \liminf_{n\to\infty} \frac{|A  \cap \cF_n|}{|\cF_n|} \,, \]
	where $\inf_{\cF}$ --- as always --- is the infimum taken over all \Folner sequences.
\end{defn}

\subsection{MEF}
Recall that a map $p:X\to Y$ between topological dynamical systems $(X,G)$ and $(Y,G)$
is called a \textit{factor map} if it is \textit{equivariant}, \ie
\[ \forall g \in G: p(g(x)) = g(p(x)) \,. \]
Here --- and in the remainder --- we notationally identify $g \in G$,
with its corresponding homeomorphism on the state space.
If $p:X \to Y$ is a factor map, we write $p:(X,G) \to (Y,G)$.
A factor map $p:(X,G) \to (Y,G)$ is called an \textit{conjugacy} if there is $q: (Y,G) \to (X,G)$
with $p \circ q = \Id_Y, q \circ p = \Id_X$.

\begin{defn}[Equicontinuity]
	A \tds $(Y,G)$ is called \emph{equicontinuous},
	if for any $\varepsilon > 0$ there is $\delta > 0$ such that
	for any $y_1, y_2 \in Y$ with $d_Y(y_1,y_2) < \delta$ we have
	\[ \forall g \in G: d_Y(g(y_1),g(y_2)) < \varepsilon \,, \]
	where $d_Y$ is the metric on $Y$.
\end{defn}
A straightforward calculation shows that 
\[ d'_Y(y_1,y_2) := \sup_{g \in G} d_Y(g(y_1),g(y_2)) \]
is an invariant metric that is topologically equivalent to the original metric $d_Y$.
Thus we can --- and will --- always assume that metrics on equicontinuous \tds are invariant.

Now let $(X,G)$ be an arbitrary \tds.
Clearly, the trivial one-point \tds $(\{ \ast \}, \Id)$ is an equicontinuous factor of $(X,G)$.
However we are interested in an equicontinuous factor which is \textit{maximal}.
\begin{defn}[Maximal Equicontinuous Factor]
	\label{def:mef}
	A factor $(Y,G)$ --- together with the factor map $\pi: (X,G) \to (Y,G)$ ---
	is a \emph{maximal equicontinuous factor} (\mef) of $(X,G)$
	if
	$(Y,G)$ is equicontinuous and
	for any factor map $p: (X,G) \to (Z,G)$ onto an equicontinuous \tds $(Z,G)$
	there is a unique factor map $q: (Y,G) \to (Z,G)$ such that $p = q \circ \pi$.
	\begin{figure}[ht]
		\centering
		\catcode`\"=12
		\begin{tikzcd}
			(X,G) \arrow[rrdd, "p"] \ar[dd, "\pi"]  & & \\
			& & \\
			 (Y,G) \arrow[rr, "\exists!\, q", dotted]                &  & (Z,G)
		\end{tikzcd}
		\caption{Universal Property of \mef}
		\label{fig:universalpropertyMEF}
	\end{figure}
\end{defn}

The existence and uniqueness --- up to conjugacy --- of the \mef is classical \cite[Theorem 1]{EG60}.
\begin{thm}
	\label{thm:existenceuniquenessMEF}
	Any \tds $(X,G)$ has a \mef $(Y,G)$.
	If $(Z,G)$ is another \mef, then there is a conjugacy $h: (Y,G) \to (Z,G)$.
\end{thm}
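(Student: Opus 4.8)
The plan is to establish uniqueness purely formally from the universal property of Definition~\ref{def:mef}, and to obtain existence by realising the \mef as a quotient of $(X,G)$ by the smallest ``equicontinuous'' closed invariant equivalence relation.

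For uniqueness, suppose $(Y,G)$ with factor map $\pi$ and $(Z,G)$ with factor map $\pi'$ are both maximal equicontinuous factors of $(X,G)$. I would apply the universal property of $(Y,G)$ to the factor map $\pi':(X,G)\to(Z,G)$ (legitimate since $(Z,G)$ is equicontinuous) to obtain a factor map $q:(Y,G)\to(Z,G)$ with $\pi'=q\circ\pi$, and symmetrically a factor map $q':(Z,G)\to(Y,G)$ with $\pi=q'\circ\pi'$. Then $q'\circ q$ and $\Id_Y$ are both factor maps $(Y,G)\to(Y,G)$ that compose with $\pi$ to give $\pi$, so the uniqueness clause of the universal property of $(Y,G)$ (applied to $p=\pi$) forces $q'\circ q=\Id_Y$; symmetrically $q\circ q'=\Id_Z$, and hence $q$ is a conjugacy. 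The only point to watch is that factor maps are surjective, which is exactly what makes $\Id_Y$ the unique self-map of $Y$ compatible with $\pi$.

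For existence, I would let $\mathcal{R}$ be the collection of all closed $G$-invariant equivalence relations $R\subseteq X\times X$ such that the quotient action of $G$ on $X/R$ is equicontinuous; this is a set (it sits inside the power set of $X\times X$) and is nonempty because $X\times X\in\mathcal{R}$. Setting $S:=\bigcap_{R\in\mathcal{R}}R$, one checks immediately that $S$ is again a closed $G$-invariant equivalence relation, so that $X/S$ with the quotient action is a \tds --- compact metrizable, since $X$ is and $S$ is closed --- and I claim that $X/S$, together with the quotient factor map $\pi_S$, is the \mef. Maximality is then easy: any factor map $p':(X,G)\to(W,G)$ onto an equicontinuous system has kernel $R_{p'}=\{(x,x'):p'(x)=p'(x')\}$, and since $X/R_{p'}$ is conjugate to $W$ it is equicontinuous, so $R_{p'}\in\mathcal{R}$ and $S\subseteq R_{p'}$; therefore $p'$ descends to a continuous $G$-equivariant map $q:X/S\to W$ with $p'=q\circ\pi_S$, unique because $\pi_S$ is surjective.

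The step I expect to be the real content, rather than bookkeeping, is verifying that $(X/S,G)$ is itself equicontinuous. Here I would use the $G$-equivariant embedding $j:X/S\hookrightarrow\prod_{R\in\mathcal{R}}X/R$, $[x]_S\mapsto([x]_R)_R$ (well defined and injective because $S=\bigcap_R R$), pull back along $j$ the $G$-invariant metric on each factor $X/R$ supplied by the remark preceding Definition~\ref{def:mef} --- normalised to be bounded by $1$ --- to get $G$-invariant continuous pseudometrics $\rho_R$ on $X/S$ jointly generating its topology, then use second countability of the compact metrizable space $X/S$ to extract a countable subfamily $(\rho_{R_n})_{n\in\N}$ that still generates the topology, so that $\rho:=\sum_n 2^{-n}\rho_{R_n}$ is a compatible $G$-invariant metric on $X/S$; equicontinuity is then immediate from invariance of $\rho$. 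The subtlety --- and the reason this cannot be circumvented by simply ``putting a metric on the product'' --- is that the ambient product $\prod_{R}X/R$ is in general neither metrizable nor second countable, so the argument has to exploit that $X/S=j(X/S)$ is already compact and metrizable in order to reduce to countably many coordinate pseudometrics. Everything else is a routine diagram chase once surjectivity of the factor maps is used.
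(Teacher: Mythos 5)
Your argument is correct, but it is worth noting that the paper does not prove this statement at all: it is quoted as classical and attributed to Ellis--Gottschalk \cite[Theorem 1]{EG60}, and the only ``constructive'' description the paper later gives is the realisation of the \mef as $X/\cQ^*$, the quotient by the smallest closed invariant equivalence relation containing the regionally proximal relation (citing Auslander). Your route is the standard structural one instead: uniqueness by the usual universal-property diagram chase (applying the uniqueness clause to $p=\pi$ to force $q'\circ q=\Id_Y$, which indeed hinges on surjectivity of factor maps), and existence by intersecting all closed $G$-invariant equivalence relations with equicontinuous quotient. The existence half is where the content lies, and your treatment is sound: Hausdorffness (hence metrizability) of $X/S$ follows from closedness of $S$ and compactness of $X$; the coordinate maps $X/S\to X/R$ give invariant continuous pseudometrics, and since $j$ is a continuous injection of a compact space into a Hausdorff one, these do generate the quotient topology; and the reduction to countably many pseudometrics is legitimate --- in fact it suffices to extract a countable subfamily separating points (cover the complement of the diagonal, a Lindel\"of subset of $(X/S)^2$, by the open sets $\{\rho_R>0\}$), after which compactness forces the coarser Hausdorff topology induced by $\rho=\sum_n 2^{-n}\rho_{R_n}$ to coincide with the original one; invariance of $\rho$ then gives equicontinuity, which on a compact space is independent of the choice of compatible metric. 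Compared with the $\cQ^*$ description, your construction proves existence and the universal property directly but yields no dynamical characterisation of the equivalence relation; the regionally proximal approach the paper relies on gives that extra information, which is what the later sections actually use.
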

As the \mef of $(X,G)$ is unique, we may give it a name and call it $(X_{\eq},G)$.
The factor map to the \mef shall be denoted by $\pi_{\eq}: (X,G) \to (X_{\eq},G)$.

\subsection{Regionally Proximal Relation}
The above description of the \mef is abstract.
The proof of Theorem \ref{thm:existenceuniquenessMEF} in \cite[Theorem 1]{EG60} is more constructive
but does not reveal any dynamical characterization of the equivalence relation induced by the \mef.

Such a characterization is given by the \textit{regionally proximal relation}.
\begin{defn}[Regionally Proximal Relation]
	\label{def:regionallyproximal}
	The \emph{regionally proximal relation} $\cQ$ is given by
	$(x,y) \in \cQ$ if and only if there are sequences $(x'_n)_{n\in\N}, (y'_n)_{n\in\N} \in X^\N$ and $(g_n)_{n\in\N} \in G^\N$ such that
	\[ 
		\lim_{n\to\infty} x'_n = x,  \lim_{n\to\infty} y'_n = y \text{ and } 	\lim_{n\to\infty}d(g_n(x'_n),g_n(y'_n)) = 0 \,.
	\]
\end{defn}

The link to the \mef is established by the following result:
\begin{thm}[{\cite[Chapter 9, 3. Theorem]{A}}]
	Let $(X,G)$ be any \tds.
	Let $\cQ^*$ be the smallest equivalence relation containing $\cQ$.
	Then the \tds $(X/{\cQ^*},G)$, together with the factor map
	\[ \pi: (X,G) \longrightarrow (X/{\cQ^*},G), \; x \longmapsto \cQ^*[x] \,,\]
	is the \mef of $(X,G)$.
\end{thm}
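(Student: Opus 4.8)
The plan is to realise $(X/\cQ^*,G)$ as an equicontinuous factor and then verify the universal property of Definition~\ref{def:mef}. Everything rests on one clean characterisation, which I would establish first. \emph{Key lemma:} for any compact metric $G$-system $W$, the system $W$ is equicontinuous if and only if its regionally proximal relation $\cQ_W$ equals the diagonal $\Delta_W$. The forward direction is immediate after passing to an invariant metric $d$ on $W$ (available by the remark preceding Definition~\ref{def:mef}): if $(x,y)\in\cQ_W$ with witnesses $x_n'\to x$, $y_n'\to y$ and $g_n\in G$, then $d(x,y)=\lim_n d(x_n',y_n')=\lim_n d(g_nx_n',g_ny_n')=0$. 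For the converse I would argue contrapositively: if $W$ is not equicontinuous there are $\varepsilon_0>0$, points $u_n,v_n$ with $d(u_n,v_n)\to 0$, and $g_n\in G$ with $d(g_nu_n,g_nv_n)\ge\varepsilon_0$; passing to subsequences so that $g_nu_n\to p$ and $g_nv_n\to q$ with $d(p,q)\ge\varepsilon_0$, the elements $g_n^{-1}$ witness $(p,q)\in\cQ_W$ since $d\bigl(g_n^{-1}(g_nu_n),g_n^{-1}(g_nv_n)\bigr)=d(u_n,v_n)\to 0$, so $\cQ_W\ne\Delta_W$.

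Next I would record the structural facts: $\cQ=\cQ_X$ is reflexive, symmetric, $G$-invariant, and \emph{closed} (the last via a diagonal extraction using compactness of $X$), so that $\cQ^*$ is a $G$-invariant equivalence relation; granting that $\cQ^*$ is closed (discussed below), $X/\cQ^*$ is a compact metric space, the action descends, and $\pi$ is a factor map. \emph{Maximality:} let $p\colon(X,G)\to(Z,G)$ be any factor map onto an equicontinuous system. Then $p$ pushes regionally proximal pairs forward: if $(x,y)\in\cQ_X$ with witnesses $x_n',y_n',g_n$, then $p(x_n')\to p(x)$, $p(y_n')\to p(y)$, and, by uniform continuity of $p$ on the compact space $X$, $d_Z\bigl(g_np(x_n'),g_np(y_n')\bigr)=d_Z\bigl(p(g_nx_n'),p(g_ny_n')\bigr)\to 0$, so $(p\times p)(\cQ_X)\subseteq\cQ_Z=\Delta_Z$ by the key lemma. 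Hence $\cQ_X$ lies in the closed $G$-invariant equivalence relation $\ker p=\{(x,y):p(x)=p(y)\}$, whence $\cQ^*\subseteq\ker p$, and $p$ factors as $p=q\circ\pi$ for a unique continuous equivariant $q\colon X/\cQ^*\to Z$ (uniqueness from surjectivity of $\pi$). Once $X/\cQ^*$ is known to be equicontinuous, this is precisely the universal property, so $(X/\cQ^*,G)$ is the \mef.

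The main obstacle is the equicontinuity of the quotient. By the key lemma it suffices to prove $\cQ_{X/\cQ^*}=\Delta$. The push-forward inclusion already gives $(\pi\times\pi)(\cQ_X)\subseteq\cQ_{X/\cQ^*}$; the crux is the reverse \emph{lifting} statement $\cQ_{X/\cQ^*}\subseteq(\pi\times\pi)(\cQ_X)$. Concretely, given $(\hat x,\hat y)\in\cQ_{X/\cQ^*}$ one lifts the witnessing sequences to $X$ and extracts convergent subsequences $a_n\to a$, $b_n\to b$, $g_na_n\to c$, $g_nb_n\to e$; continuity of $\pi$ together with the witness condition forces $\pi(c)=\pi(e)$, that is $(c,e)\in\cQ^*$, and the task is to propagate this back to $(a,b)\in\cQ^*$. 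The genuine difficulty lies exactly in the transitive closure: $c$ and $e$ are joined by a \emph{chain} of $\cQ$-related points rather than being equal, and one must transport the limiting relation along this chain. This is the technical heart of the Ellis--Gottschalk theorem, and here I would follow the compactness and enveloping-semigroup arguments of \cite{A}.

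Alternatively, I would sidestep the lifting by building the \mef directly as $X/R_0$, where $R_0:=\bigcap\{R : R \text{ is a closed } G\text{-invariant equivalence relation with } X/R \text{ equicontinuous}\}$. This $R_0$ is closed (an intersection of closed relations) and $G$-invariant, and $X/R_0$ is equicontinuous as a compact metric subsystem of $\prod_R (X/R)$; its universal property is immediate from the construction. The same push-forward argument shows $\cQ_X\subseteq R$ for every such $R$, hence $\cQ^*\subseteq R_0$. Identifying $R_0$ with $\cQ^*$ then reduces to the single point that $X/\cQ^*$ is equicontinuous (so that $\cQ^*$ is one of the relations $R$), which is again the obstacle above; establishing it simultaneously yields the closedness of $\cQ^*$ used earlier.
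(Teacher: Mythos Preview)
The paper does not prove this theorem; it is quoted with the citation \cite[Chapter 9, 3.~Theorem]{A} and no argument is given in the text. There is thus no in-paper proof to compare your proposal against.

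Your outline follows the classical route and is correct as far as it goes. The key lemma (equicontinuity of $W$ $\Leftrightarrow$ $\cQ_W=\Delta_W$) and the push-forward argument establishing the universal property are exactly right, and you have honestly isolated the real content: showing that $X/\cQ^*$ is itself equicontinuous, equivalently lifting regionally proximal pairs through the quotient map. You are also right that this step is not elementary and is where the enveloping-semigroup or uniformity arguments of \cite{A} enter; deferring to that reference is appropriate here, since the paper does the same. Your alternative construction via $R_0=\bigcap R$ is the standard way to produce the \mef directly, and your observation that identifying $R_0$ with $\cQ^*$ reduces to the same obstacle is accurate. One small caveat: the paper's phrasing ``smallest equivalence relation containing $\cQ$'' is slightly informal---in Auslander's treatment one works with the smallest \emph{closed} $G$-invariant equivalence relation containing $\cQ$, which avoids having to verify separately that the bare transitive closure is closed; you flagged this issue yourself, and it is worth keeping in mind.
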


Relevant for us are also the multivariate notions.
\begin{defn}[Multivariate Regionally Proximal Relation]
	\label{def:multivariateregionallyproximal}
	Let $m \in \N$ with $m \geqslant 2$ and define the \emph{regionally proximal $m$-relation}, by
	$(x_1,\ldots,x_m) \in \cQ_m$ if and only if there are sequences $(x'_{1,n})_{n\in\N}, \ldots, (x'_{m,n})_{n\in\N} \in X^\N$ and $(g_n)_{n\in\N} \in G^\N$ such that
	\[ 
		\forall i \in \{ 1,\ldots, m \}: \lim_{n\to\infty} x'_{i,n} = x_i, \text{ and } \lim_{n\to\infty} \max_{i\neq j} d(g_n(x'_{i,n}),g_n(y'_{j,n})) = 0 \,.
	\]
\end{defn}

The dynamics inside of the classes of the regionally proximal relation is sensitive:
\begin{lem}
	Assume that $(X,G)$ is minimal.
	Then $(x_1,\ldots,x_m) \in \cQ_m$
	if and only if
	for any non-empty and open $B \subseteq X$
	and for any collection neighbourhoods $U_1,\ldots,U_m$ of the points $x_1,\ldots,x_m$ there is $h \in G$ such that
	$h(B) \cap U_i \neq \emptyset$ for $i \in \{ 1, \ldots m \}$.
\end{lem}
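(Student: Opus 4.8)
The plan is to prove the two implications separately; minimality will be used only for the direction ``$(x_1,\ldots,x_m)\in\cQ_m \Rightarrow$ dynamical condition''. Throughout I work with the sequential formulation of Definition~\ref{def:multivariateregionallyproximal}: $(x_1,\ldots,x_m)\in\cQ_m$ exactly when there are sequences $x'_{i,n}\to x_i$ ($1\le i\le m$) and $g_n\in G$ with $\max_{i\ne j}d\bigl(g_n(x'_{i,n}),g_n(x'_{j,n})\bigr)\to 0$.

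For the implication ``dynamical condition $\Rightarrow$ $(x_1,\ldots,x_m)\in\cQ_m$'', which needs no minimality, I would fix an arbitrary point $y_0\in X$ and, for each $n\in\N$, apply the hypothesis to the non-empty open set $B^{(n)}:=B(y_0,1/n)$ and to the neighbourhoods $U_i^{(n)}:=B(x_i,1/n)$ of $x_i$. This yields $h_n\in G$ with $h_n(B^{(n)})\cap U_i^{(n)}\ne\emptyset$ for all $i$; picking a point $x'_{i,n}$ in each such intersection gives $x'_{i,n}\to x_i$, while $h_n^{-1}(x'_{i,n})\in B^{(n)}$ forces $\max_{i\ne j}d\bigl(h_n^{-1}(x'_{i,n}),h_n^{-1}(x'_{j,n})\bigr)\le\diam(B^{(n)})\le 2/n\to 0$. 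Setting $g_n:=h_n^{-1}$ then exhibits $(x_1,\ldots,x_m)\in\cQ_m$.

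For the converse, start from data $x'_{i,n}\to x_i$ and $g_n\in G$ with $\max_{i\ne j}d\bigl(g_n(x'_{i,n}),g_n(x'_{j,n})\bigr)\to 0$. Passing to a subsequence (which preserves all of these properties) and using compactness of $X$, I may assume $g_n(x'_{1,n})\to z$ for some $z\in X$; since $d\bigl(g_n(x'_{i,n}),g_n(x'_{1,n})\bigr)\le\max_{k\ne l}d\bigl(g_n(x'_{k,n}),g_n(x'_{l,n})\bigr)\to 0$, in fact $g_n(x'_{i,n})\to z$ for every $i$. Now let a non-empty open set $B\subseteq X$ and neighbourhoods $U_1,\ldots,U_m$ of $x_1,\ldots,x_m$ be given. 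This is the only place minimality enters: the orbit of $z$ is dense, hence meets $B$, so there is $k\in G$ with $z\in k(B)$. Since $k(B)$ is open and $g_n(x'_{i,n})\to z$, for all sufficiently large $n$ we have $g_n(x'_{i,n})\in k(B)$ for every $i$, and also $x'_{i,n}\in U_i$ for every $i$. Fixing one such $n$ and putting $h:=g_n^{-1}k$, we obtain $h(B)=g_n^{-1}\bigl(k(B)\bigr)\ni g_n^{-1}\bigl(g_n(x'_{i,n})\bigr)=x'_{i,n}\in U_i$, so $h(B)\cap U_i\ne\emptyset$ for all $i$, as required.

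The one genuinely delicate point is in the converse. It is tempting to try to control $g_n^{-1}$ on a fixed neighbourhood of the common limit point $z$, but that is hopeless because $G$ need not act equicontinuously. The right move is not to shrink or move a neighbourhood of $z$ at all, but rather to use minimality to transport the given set $B$ by a single group element $k$ so that $k(B)$ becomes a neighbourhood of $z$; the points $g_n(x'_{i,n})$ then fall into $k(B)$ for large $n$ automatically, and applying $g_n^{-1}$ carries them back next to $x_1,\ldots,x_m$. Everything else is routine bookkeeping with finitely many limits.
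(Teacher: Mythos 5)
Your argument is correct: the forward use of $B(y_0,1/n)$ and $B(x_i,1/n)$ with $g_n:=h_n^{-1}$, and the converse via a common limit $z$ of the $g_n(x'_{i,n})$ plus minimality to find $k$ with $z\in k(B)$ and then $h:=g_n^{-1}k$, both go through without gaps. The paper itself omits a proof and only cites \cite{huang2011measure} and \cite[Lemma 2.7]{Lino2025}; your argument is essentially the standard one given there, so there is nothing to add.
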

A proof can be found for example in \cite{huang2011measure} and \cite[Lemma 2.7]{Lino2025}.  For a large
class of systems the multivariate regionally proximal relation is exactly given
by the pairwise regionally proximal relation and the regionally proximal
relation is already a (closed) equivalence relation.

\begin{thm}[{\cite[Theorem 8]{AuslanderGroupTheoretic}}]
	\label{thm:auslandersmagic}
	Assume that $(X,G)$ is locally Bronstein (see below).
	For any $m \in \N$ and any $(x_1,\ldots,x_m)\in X^m$ we have
	\[ (x_1,\ldots,x_m) \in \cQ_m \quad \text{ if and only if } \quad (x_1,x_i) \in \cQ \text{ for any } i \in \{1,\ldots,m\} \,. \]
\end{thm}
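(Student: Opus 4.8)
The implication from left to right is immediate and uses no hypothesis: if $(x_1,\ldots,x_m)\in\cQ_m$ is witnessed by sequences $(x'_{i,n})_n\to x_i$ and $(g_n)_n$ with $\max_{i\neq j}d\big(g_n(x'_{i,n}),g_n(x'_{j,n})\big)\to 0$, then for each fixed $i$ the data $(x'_{1,n})_n$, $(x'_{i,n})_n$, $(g_n)_n$ already witness $(x_1,x_i)\in\cQ$. So all the work is in the converse, which I would attack through the open-set description of $\cQ_m$ from the lemma above (in the setting at hand the systems are minimal, so that lemma applies): it suffices to show that if $(x_1,x_i)\in\cQ$ for all $i$, then for every nonempty open $B\subseteq X$ and all neighbourhoods $U_i$ of $x_i$ there is $h\in G$ with $h(B)\cap U_i\neq\emptyset$ for $1\le i\le m$. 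Passing to the product $X^m$ with the diagonal action, this is the same as $(x_1,\ldots,x_m)\in\overline{\bigcup_{g\in G}g(B^{m})}$ for every nonempty open $B$.

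First I would record the structural input from the local Bronstein condition: it forces $\cQ$ to be a closed invariant equivalence relation, hence to coincide with $\cQ^{*}=R_{\pi_{\eq}}:=\{(x,y):\pi_{\eq}(x)=\pi_{\eq}(y)\}$ (the inclusion $\cQ\subseteq R_{\pi_{\eq}}$ being automatic). Thus ``$(x_1,x_i)\in\cQ$ for all $i$'' says exactly that $x_1,\ldots,x_m$ lie over a common point of $X_{\eq}$, i.e.\ that $(x_1,\ldots,x_m)$ lies in the $m$-fold fibred power $R^{(m)}:=X\times_{X_{\eq}}\cdots\times_{X_{\eq}}X$; combined with the first paragraph, the theorem reduces to the inclusion $R^{(m)}\subseteq\bigcap_{B}\overline{\bigcup_{g}g(B^{m})}$, the reverse inclusion being the easy direction already handled.

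The core is an induction on $m$. The base case $m=2$ is the classical fact $\cQ=R_{\pi_{\eq}}$ just recorded. For the step I would first establish that the local Bronstein condition propagates to the fibred powers, namely that almost periodic points are dense in $R^{(m)}$ (and in $R^{(m+1)}$); this is itself an induction, viewing $R^{(m)}=R^{(m-1)}\times_{X_{\eq}}X$ and pulling minimal subsets of $R^{(m-1)}$ back through $\pi_{\eq}$ --- and it is the possible non-openness of $\pi_{\eq}$ that makes the \emph{local} rather than the global Bronstein condition the correct hypothesis. Granting this, fix $B$ and the $U_i$; since $(x_1,\ldots,x_m)\in R^{(m)}$, I would pick an almost periodic tuple $\hat z=(z_1,\ldots,z_m)\in R^{(m)}$ with $z_i\in U_i$, let $M:=\overline{G\hat z}\subseteq X^m$ be the resulting minimal set, and then transport a small open box sitting inside $B$ into a neighbourhood of $\hat z$, using: the induction hypothesis on the first $m-1$ coordinates, the minimality of $X$ and of $M$ to reinsert the $m$-th coordinate, and the fact that in the \emph{equicontinuous} factor $X_{\eq}$ the set of group elements returning a given point --- hence, by invariance of the metric, every point --- to within a prescribed distance of itself is syndetic. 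This last rigidity is what lets a single $h\in G$ act correctly on all $m$ coordinates at once.

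The step I expect to require the most care is exactly that last point: producing \emph{one} $h$ that sends $B$ into every $U_i$ simultaneously, i.e.\ not losing the conditions already secured for the first coordinates when the last is adjoined. Chaining the two-variable lemma coordinate by coordinate destroys earlier conditions, and a single minimal subset of $R^{(m)}$ need not meet $B^{m}$ (as elementary examples such as the Thue-Morse subshift show), so the construction has to be anchored at an almost periodic tuple of $R^{(m)}$ and has to exploit both the minimality of its orbit closure and the rigidity of $X_{\eq}$. The other genuine input is the verification that the local Bronstein condition really does yield density of almost periodic points in the fibred powers across the (non-open) map $\pi_{\eq}$; the remaining steps --- shrinking neighbourhoods, passing to subnets, choosing return times in $X_{\eq}$ finely enough --- are routine.
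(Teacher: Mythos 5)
There is a genuine gap: what you have written is a proof \emph{plan}, and the steps you leave as announcements are precisely the hard content of the statement. Note first that the paper itself does not prove this theorem; it is quoted as \cite[Theorem 8]{AuslanderGroupTheoretic}, with Remark \ref{rem:auslandersmagic} explaining that one must combine lemmas from \cite{AuslanderProximalCell} and \cite{AuslanderGroupTheoretic} to get exactly this form. So a blind proof would have to genuinely reprove Auslander's result, and your sketch does not do that. Concretely: (1) your ``structural input'' that the local Bronstein condition forces $\cQ$ to be a closed invariant equivalence relation, hence $\cQ=\cQ^{*}=R_{\pi_{\eq}}$, is itself a substantial theorem (this is the content of Auslander's work, usually proved via the enveloping semigroup and its $\tau$-topology), and you assert it without argument; (2) the claim that the \emph{local} Bronstein condition ``propagates to the fibred powers'', i.e.\ that almost periodic points are dense in $R^{(m)}$, is not justified and is not obviously true under the paper's weak local hypothesis (which only concerns approximation of a single almost periodic regionally proximal \emph{pair} by almost periodic pairs), so the anchor point $\hat z$ of your induction may simply fail to exist; (3) the decisive step --- producing a single $h\in G$ with $h(B)\cap U_i\neq\emptyset$ for all $i$ simultaneously, rather than coordinate by coordinate --- is exactly what you flag as ``requiring the most care'' and never carry out; the syndeticity of return times in the equicontinuous factor controls only the $X_{\eq}$-coordinates and does not by itself prevent the fibre coordinates from drifting when the last coordinate is adjoined. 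Since (1)--(3) together constitute essentially all of Auslander's Theorem~8, the proposal cannot be accepted as a proof.

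The easy direction (projection of a witnessing sequence for $\cQ_m$ to pairs) is fine, and the overall architecture --- reduce to $R^{(m)}\subseteq\cQ_m$ via the open-set characterisation of $\cQ_m$ for minimal systems and induct on $m$ --- is a reasonable outline. But to close it you would either have to import the Ellis-group machinery that Auslander actually uses (in which case you should simply cite \cite{AuslanderProximalCell,AuslanderGroupTheoretic} as the paper does), or supply complete arguments for the three points above, none of which is routine.
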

\begin{rem}
	\label{rem:auslandersmagic}
	This is not the exact statement of \cite[Theorem 8]{AuslanderGroupTheoretic}.
	However combining lemmas from \cite{AuslanderProximalCell} and \cite{AuslanderGroupTheoretic}
	one obtains Theorem \ref{thm:auslandersmagic}.
\end{rem}

Here the class of locally Bronstein systems is defined as follows:
\begin{defn}[Locally Bronstein]
	A system is called \emph{locally Bronstein} if for each pair $(x,y) \in
        X^2$ which is almost periodic in $(X^2,G)$ and satisfies $x \cQ y$ there
        are sequences $(x''_{n})_{n\in\N}, (y''_{n})_{n\in\N} \in X^\N$ with
        $(x''_n,y''_n)$ almost periodic in $(X^2,G)$ and there is a sequence
        $(g_n)_{n\in\N} \in G^\N$ such that
	\[ 
		\forall i \in \{ 1,\ldots, m \}: \lim_{n\to\infty} x''_{i,n} =
                x_i, \text{ and } \lim_{n\to\infty} \max_{i\neq j}
                d(g_n(x''_{i,n}),g_n(y''_{j,n})) = 0 \ . 
	\]
	In other words, the approximating points $(x''_{n})_{n\in\N},
        (y''_{n})_{n\in\N} \in X^\N$ in Definition \ref{def:regionallyproximal}
        can be chosen almost periodic. .
\end{defn}

For many acting groups the local Bronstein property is satisfied automatically.
\begin{lem}
	\label{lem:nilpotentimpliesbronstein}
	If the acting group $G$ is virtually nilpotent --- in particular, abelian --- 
	any minimal system $(X,G)$ is \textit{incontractible}, \ie
	the almost periodic points of $(X^n,G)$ are dense in $X^n$ for any $n \in \N$.
\end{lem}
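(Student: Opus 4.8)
I would reduce the statement to the case of a nilpotent acting group, where it is classical, and handle the virtually nilpotent case by an elementary finite-index argument.

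\emph{From virtually nilpotent to nilpotent.} The first step is to pass to a nilpotent subgroup $H\leqslant G$ of finite index and replace it by its normal core $N=\bigcap_{g\in G}gHg^{-1}$, which is again nilpotent, of finite index, and normal in $G$. Since $X$ is $G$-minimal and $N$ is normal, the $N$-orbit closures form a finite clopen partition $X=X_1\cup\cdots\cup X_r$ into $N$-minimal sets that $G$ permutes transitively, so the $X_j$ are pairwise isomorphic as $N$-systems up to the conjugation automorphisms of $N$. Hence $X^n$ is the disjoint union of the $N$-invariant clopen sets $X_{j_1}\times\cdots\times X_{j_n}$, each a finite product of $N$-conjugate $N$-minimal systems, and --- granting the nilpotent case, which (phrased via strong amenability) applies equally to such products --- the almost periodic points of $(X^n,N)$ are dense in $X^n$. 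Finally I would note that if $z\in X^n$ has $N$-minimal orbit closure, then $\overline{Gz}=\bigcup_{gN\in G/N}g\,\overline{Nz}$ is a finite union of $N$-minimal sets permuted transitively by $G$; a short check shows $\overline{Gz}$ is then $G$-minimal, so every $N$-almost periodic point of $X^n$ is $G$-almost periodic and $(X,G)$ is incontractible.

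\emph{The nilpotent case.} It then remains to show that a minimal system $(X,G)$ of a nilpotent group $G$ is incontractible, \ie that for each $n$ the union of the minimal subsets of $X^n$ is dense in $X^n$. Here I would invoke the classical fact, due to Glasner, that every nilpotent group is \emph{strongly amenable} --- it admits no nontrivial minimal system in which all pairs of points are proximal --- and that minimal systems of strongly amenable groups are incontractible (see Glasner's monograph on proximal flows, and Auslander's book on minimal flows). In the purely abelian case one can alternatively argue through the enveloping semigroup $E(X)$, using that $G$ lies in the centre of $E(X)$, \ie $gp=pg$ for all $g\in G$ and $p\in E(X)$, in order to control its minimal idempotents; I would not pursue this route.

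\emph{Where the difficulty lies.} All the real content sits in the nilpotent --- indeed already the abelian --- case, and specifically in obtaining \emph{density} of almost periodic points in $X^n$ rather than mere existence. The obvious remark that every $z\in X^n$ is proximal to an almost periodic point, namely $uz$ for a minimal idempotent $u\in E(X)$, places only a single almost periodic point in each orbit closure with no control on its location, and so does not give density; closing this gap is exactly what strong amenability supplies. The finite-index reduction, by contrast, is routine, its only mild subtlety being the conjugation automorphisms of $N$, which do no harm since the nilpotent-case statement is insensitive to pre-composing the action with a group automorphism.
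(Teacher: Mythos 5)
Your argument rests on exactly the same external input as the paper's: the only substantive ingredient is Glasner's result that (virtually) nilpotent groups admit no nontrivial minimal proximal flow, combined with the characterisation of incontractibility of a minimal flow via disjointness from all minimal proximal flows (\cite[Proposition 4.4]{GlasnerBernoulliDisjointness}, going back to \cite{GlasnerCompressibility}). The paper applies this directly to virtually nilpotent groups, since the proof of \cite[Theorem 7.5]{GlasnerCompressibility} already yields triviality of minimal proximal flows in the virtually nilpotent case; consequently your finite-index reduction to a nilpotent normal core $N$, while sound, is a detour that can be skipped altogether. What your route buys is that it only needs strong amenability for honestly nilpotent groups (as in Glasner's proximal flows monograph), at the price of the extra dynamical bookkeeping with the clopen $N$-minimal decomposition and the passage from $N$-almost periodicity to $G$-almost periodicity; both of those steps are correct as you state them.

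The one place where you assert more than your black box literally gives is the claim that the nilpotent case \enquote{applies equally to such products}: after decomposing $X$ into the clopen $N$-minimal pieces $X_1,\dots,X_r$, you need density of $N$-almost periodic points in each block $X_{j_1}\times\cdots\times X_{j_n}$, which is a product of \emph{distinct} minimal $N$-flows, whereas incontractibility as defined (and as stated in your nilpotent-case input) concerns powers of a single minimal flow. This is true, but it deserves a line of proof: choose a minimal subflow $Z\subseteq X_{j_1}\times\cdots\times X_{j_n}$; each coordinate projection $\pi_k:Z\to X_{j_k}$ is surjective by minimality of $X_{j_k}$, so the map $Z^n\to X_{j_1}\times\cdots\times X_{j_n}$, $(z_1,\dots,z_n)\mapsto(\pi_1(z_1),\dots,\pi_n(z_n))$, is a surjective factor map of $N$-flows; since the minimal $N$-flow $Z$ is incontractible, the almost periodic points of $Z^n$ are dense, and their images form a dense set of almost periodic points in the block. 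With this supplement your reduction is complete and yields the lemma; without it, the step from incontractibility of single minimal $N$-flows to the mixed products is unjustified as written.
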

\begin{rem}
	\begin{enumerate}[a)]
		\item
			A straightforward argument (just using the case $n=2$)
                        shows that any incontractible system is locally
                        Bronstein.
		\item
			The main source of Lemma
                        \ref{lem:nilpotentimpliesbronstein} is (the proof of)
                        \cite[Theorem 7.5]{GlasnerCompressibility}.  However, it
                        is not immediatly clear how to connect the proof and
                        statement of \cite[Theorem 7.5]{GlasnerCompressibility}
                        with Lemma \ref{lem:nilpotentimpliesbronstein}.  The way
                        to bridge that gap is spread across the Literature.  For
                        ease of the reader and precise referencing we sketch
                        that connection may be done:

			In \cite[Definition 4.3]{GlasnerBernoulliDisjointness}
                        the above definition of \textit{incontractible systems}
                        is given.  Minimal incontractible systems are
                        characterized by disjointness to any minimal proximal
                        flow \cite[Proposition
                          4.4]{GlasnerBernoulliDisjointness}.\footnote { For a
                          minimal system \textit{incontractibility} is
                          equivalent to \textit{un-contractibility} --- in the
                          sense of \cite{GlasnerCompressibility}.  This is due
                          to the characterization in \cite[Theorem
                            4.2]{GlasnerCompressibility} which states that for
                          minimal system non-contractibility is agian equivalent
                          to disjointness to any minimal proximal flow.  } In
                        the proof of \cite[Theorem 7.5]{GlasnerCompressibility},
                        we now see that for virtually nilpotent acting groups
                        the minimal proximal systems are all trivial.  Thus the
                        condition of being disjoint to all of them is vacuously
                        satisfied.  Hence all systems with a virtually nilpotent
                        acting group are incontractible.
	\end{enumerate}
\end{rem}

\section{Auslander-Yorke type dichotomies for multivariate diam mean equicontinuity and frequent stability}
\label{AuslanderYorke}
The argument for Auslander-Yorke type dichotomies usually follow a similar
structure: The set of $\varepsilon$-sensitivity points --- for each sensitivity
constant $\varepsilon$ --- is invariant and closed.  Therefore, if a
transitivity point is sensitive --- and thus $\varepsilon$-sensitive for some
$\varepsilon > 0$ --- the whole system is sensitive.

\subsection{Multivariate frequent stability and strong spreading}
\label{FrequentStabilityAndSpreading}

 Let $\xg$ be a \tds.
Following ideas in \cite{LiTuYe2015meanequicontinuous}, we study the set of
frequently $m$-stability points, denoted by $\fs$.
For every $\varepsilon>0$, let
\begin{align}
	\label{eq:defFS}
	\fs_\varepsilon \ := \ \left\{x\in X \ \middle| \ \exists\delta>0:
	\uBD\left(\left\{ g\in G\mid \diam_m(g B(x,\delta)) < \varepsilon\right\}\right) \ > \ 0 
	\right\}.
\end{align}
Note that 
$(\fs_\varepsilon)^c$ is the set of strong $\varepsilon$-$m$-spreading points.
\begin{prop}
	\label{prop:gdeltaandinvariancefreqstab} Let $\xg$ be a \tds, and
	$\varepsilon>0$.  Then $\fs_\varepsilon$ is open and invariant, \ie
	$h(\fs_\varepsilon)=\fs_\varepsilon$ for any $h\in G$.  Consequently,
	the set of strong $\varepsilon$-$m$-spreading points is closed and
	invariant.  Furthermore, as $\fs=\cap_{n=1}^\infty\fs_{1/n}$ we conclude
	that $\fs$ is $G_\delta$.
\end{prop}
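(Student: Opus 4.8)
The plan is to verify openness, invariance, and the $G_\delta$ property separately; the only substantive point is the invariance of $\fs_\varepsilon$ under the $G$-action, which rests on the right-translation invariance of the upper Banach density.

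For openness, I would start from a point $x\in\fs_\varepsilon$ with witness $\delta>0$, so that $S:=\{g\in G\mid\diam_m(gB(x,\delta))<\varepsilon\}$ has $\uBD(S)>0$, and observe that any $x'$ with $d(x,x')<\delta/2$ satisfies $B(x',\delta/2)\subseteq B(x,\delta)$. Hence $g(B(x',\delta/2))\subseteq g(B(x,\delta))$ and $\diam_m(gB(x',\delta/2))\le\diam_m(gB(x,\delta))$ for all $g\in G$, so $\{g\mid\diam_m(gB(x',\delta/2))<\varepsilon\}\supseteq S$ has positive upper Banach density and $x'\in\fs_\varepsilon$ with witness $\delta/2$. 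Thus $B(x,\delta/2)\subseteq\fs_\varepsilon$, and $\fs_\varepsilon$ is open.

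For invariance, fix $h\in G$ and $x\in\fs_\varepsilon$ with witness $\delta$ and $S$ as above. Since $h$ is a homeomorphism, $h(B(x,\delta))$ is an open neighbourhood of $hx$ and hence contains a ball $B(hx,\delta')$ for some $\delta'>0$. For every $g\in G$ we then get $g(B(hx,\delta'))\subseteq g(h(B(x,\delta)))=(gh)(B(x,\delta))$, so $\diam_m(gB(hx,\delta'))\le\diam_m((gh)B(x,\delta))$ and therefore
\[
\{g\in G\mid\diam_m(gB(hx,\delta'))<\varepsilon\}\ \supseteq\ \{g\in G\mid\diam_m((gh)B(x,\delta))<\varepsilon\}\ =\ Sh^{-1}\ .
\]
It remains to show $\uBD(Sh^{-1})>0$, and this is the step I expect to be the main (if modest) obstacle: one must check that $\uBD$ is invariant under right translations. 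For this I would use that, for any \Folner sequence $\cF=(\cF_n)_{n\in\N}$, the sequence $(\cF_nh)_{n\in\N}$ is again \Folner (since $|\cF_nh|=|\cF_n|$ and $|\cF_nh\,\triangle\,K\cF_nh|=|\cF_n\,\triangle\,K\cF_n|$ for every finite $K\subseteq G$), while $|Sh^{-1}\cap\cF_n|=|S\cap\cF_nh|$; as $\cF\mapsto(\cF_nh)_{n\in\N}$ is a bijection of the set of \Folner sequences onto itself, passing to the supremum over all of them yields $\uBD(Sh^{-1})=\uBD(S)>0$. Hence $hx\in\fs_\varepsilon$, i.\,e.\ $h(\fs_\varepsilon)\subseteq\fs_\varepsilon$; applying this to $h^{-1}$ gives the reverse inclusion, so $h(\fs_\varepsilon)=\fs_\varepsilon$.

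The remaining claims are then formal. The set of strong $\varepsilon$-$m$-spreading points is $(\fs_\varepsilon)^c$, which is closed as the complement of an open set and invariant as the complement of an invariant set. Finally, $\fs=\bigcap_{n=1}^\infty\fs_{1/n}$ --- sets $A\ni x$ of small diameter may be replaced by balls about $x$, and $\fs_{1/n}\subseteq\fs_\varepsilon$ as soon as $1/n<\varepsilon$ --- so $\fs$ is a countable intersection of open sets, hence $G_\delta$.
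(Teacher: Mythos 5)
Your proposal is correct and follows essentially the same route as the paper: openness via $B(y,\delta/2)\subseteq B(x,\delta)$, and invariance via the containment $B(hx,\delta')\subseteq hB(x,\delta)$ together with the fact that right translates $(\cF_n h)_{n\in\N}$ of a \Folner sequence are again \Folner, so that the upper Banach density is right-translation invariant. The only cosmetic difference is that you argue the invariance directly ($\uBD(Sh^{-1})=\uBD(S)>0$) whereas the paper phrases the same computation contrapositively, starting from a strong spreading point $hx$.
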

\begin{proof}
	For any $x\in \fs_\varepsilon$, we choose $\delta>0$ --- as in the
	definition of $\fs_\varepsilon$ in Equation (\ref{eq:defFS}) --- such
	that \[ \uBD\left(\left\{ g\in G\mid \diam_m(g B(x,\delta))
	< \varepsilon\right\}\right) \ > \ 0 \,.\] Note that for any $y\in
	B(x,\delta/2)$, we have $B(y,\delta/2)\subset B(x,\delta)$. Therefore,
	we obtain \[ \uBD\left(\left\{ g\in G\mid \diam_m\left(g
	B\left(y,\frac{\delta}{2}\right)\right) < \varepsilon\right\}\right) \
	> \ 0 \,.\] Thus $y\in\fs_\varepsilon$, and hence $\fs_\varepsilon$ is
	open.
			
	In order to show the invariance let $h \in G$ be arbitrary and assume
	that $x \in X$ such that $hx\notin\fs_\varepsilon$.  We will show
	$x\notin\fs_\varepsilon$.  As $h: X \to X$ is a homeomorphism
	$hB(x,\delta)$ is open for any $\delta > 0$.  Hence, for any $\delta >
	0$ there exists $\eta_\delta >0$ such that $B(hx,\eta_\delta)\subset
	hB(x,\delta)$.

	By assumption, \[ \limsup_{n\to\infty}\frac{1}{|\cF_n|} |\left\{
	g\in \cF_n \mid \diam_m\left(g B\left(hx,\eta_\delta\right)\right)
	< \varepsilon\right\}| = 0 \] for any \Folner sequence $\cF$ and $\delta
	> 0$.  Therefore, \begin{align*}
	&\limsup_{n\to\infty}\frac{1}{|\cF_n|} |\left\{
	g\in \cF_n \mid \diam_m\left(g B\left(x,\delta \right)\right)
	< \varepsilon\right\}| \\ =&\limsup_{n\to\infty}\frac{1}{|\cF_n|} |\left\{
	g\in \cF_n\cdot
	h^{-1} \mid \diam_m\left((gh)B\left(x,\delta\right)\right)
	< \varepsilon\right\}| \\ \leqslant&\limsup_{n\to\infty}\frac{1}{|\cF_n|} |\left\{
	g\in \cF_n\cdot
	h^{-1} \mid \diam_m\left(gB\left(hx,\eta_\delta\right)\right)
	< \varepsilon\right\}|. \end{align*} Since $\{\cF_nh^{-1}\}_{n=1}^\infty$
	is also a \Folner sequence, the latter is $0$ by assumption.  So
	$x \notin \fs_\varepsilon$.  By contraposition we obtain that
	$x \in \fs_\varepsilon$ implies $hx \in \fs_\varepsilon$ --- \ie the
	desired invariance.  Replacing $x$ by $h(x)$ and $h$ by $h^{-1}$, we
	also obtain the reverse inclusion and the invariance of
	$(\fs_\varepsilon)^c$.
\end{proof}

\begin{prop}[Auslander-Yorke Dichotomy for Frequent Stability]
	\label{prop1}
	\label{prop:auslanderyorkefreqstab}
	Let $\xg$ be a transitive \tds.
	Then $\xg$ is either almost frequently $m$-stable
	--- \ie $\fs$ is dense
	$G_\delta$-set ---
	or $\xg$ is strongly $m$-spreading  --- \ie $\fs = \emptyset$.

	If in addition, $\xg$ is minimal, then either $\xg$ is frequently $m$-stable
	--- \ie $\fs = X$ ---
	or $\xg$ is strongly $m$-spreading
	--- \ie $\fs=\emptyset$.
\end{prop}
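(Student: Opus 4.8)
The plan is to run the standard Auslander--Yorke argument, the key input being that each set $\fs_\varepsilon$ defined in (\ref{eq:defFS}) is open and $G$-invariant by Proposition~\ref{prop:gdeltaandinvariancefreqstab}, and that $\fs=\bigcap_{n=1}^\infty\fs_{1/n}$. I will split into two cases according to whether $\fs_\varepsilon=\emptyset$ for some $\varepsilon>0$, or $\fs_\varepsilon\neq\emptyset$ for every $\varepsilon>0$.

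In the first case, say $\fs_{\varepsilon_0}=\emptyset$, I claim $\xg$ is strongly $m$-spreading with constant $\varepsilon_0$ and $\fs=\emptyset$. Given any nonempty open $U\subseteq X$, pick $x\in U$ and $\delta>0$ with $B(x,\delta)\subseteq U$. Since $x\notin\fs_{\varepsilon_0}$, unwinding (\ref{eq:defFS}) gives $\uBD\left(\left\{g\in G\mid \diam_m(gB(x,\delta))<\varepsilon_0\right\}\right)=0$. Because $B(x,\delta)\subseteq U$ forces $\diam_m(gU)\geq \diam_m(gB(x,\delta))$ for every $g$, we get the inclusion $\left\{g\mid \diam_m(gU)<\varepsilon_0\right\}\subseteq\left\{g\mid \diam_m(gB(x,\delta))<\varepsilon_0\right\}$, so the left-hand set also has upper Banach density $0$; this is precisely (\ref{e.strong_spreading}). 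The same inclusion (applied with $B(x,\delta)$ in place of $U$) shows $\fs_{1/n}=\emptyset$ whenever $1/n\leq\varepsilon_0$, hence $\fs=\emptyset$.

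In the second case, fix a transitivity point $x_0$, i.e. one with $\overline{\{gx_0\mid g\in G\}}=X$. For each $\varepsilon>0$ the set $\fs_\varepsilon$ is nonempty, open and $G$-invariant, hence it meets the dense orbit of $x_0$, and by invariance it then contains the whole orbit; thus $\fs_\varepsilon$ is a dense open set. Since $X$ is compact metric, hence a Baire space, $\fs=\bigcap_{n=1}^\infty\fs_{1/n}$ is a dense $G_\delta$ set, so $\xg$ is almost frequently $m$-stable; this proves the transitive dichotomy. If moreover $\xg$ is minimal, I simply note that minimality sharpens the second case: a nonempty open $G$-invariant set in a minimal system is all of $X$ (its complement is closed and invariant, hence empty), so $\fs_\varepsilon=X$ for every $\varepsilon>0$ and $\fs=X$, i.e. $\xg$ is frequently $m$-stable; the first case is unchanged and gives $\fs=\emptyset$.

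I do not expect a genuine obstacle here — the argument is essentially formal once Proposition~\ref{prop:gdeltaandinvariancefreqstab} is available. The two points deserving a little care are (i) the monotonicity $\diam_m(gU)\geq\diam_m(gB(x,\delta))$, which is exactly what upgrades the pointwise statement ``$\fs_{\varepsilon_0}=\emptyset$'' to the global statement ``$\xg$ is strongly $m$-spreading''; and (ii) the one place transitivity is really used, namely that a nonempty open invariant set, while not dense in general, becomes dense as soon as it contains a point with dense orbit.
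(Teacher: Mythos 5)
Your proof is correct, and it rests on the same two ingredients as the paper's argument: Proposition~\ref{prop:gdeltaandinvariancefreqstab} (each $\fs_\varepsilon$ is open and invariant, and $\fs=\bigcap_{n=1}^\infty\fs_{1/n}$ is $G_\delta$) together with the existence of a point with dense orbit. The case decomposition is organised differently, though. The paper fixes a transitivity point $x$ and splits on whether $x\in\fs$: if not, then $x\in(\fs_\varepsilon)^c$ for some $\varepsilon$, and the closedness and invariance of $(\fs_\varepsilon)^c$ give $X=\overline{Gx}\subseteq(\fs_\varepsilon)^c$, hence strong $m$-spreading; if so, invariance of $\fs$ makes $\fs\supseteq Gx$ dense, so the $G_\delta$ property alone yields the first alternative, with no appeal to Baire. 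You instead split on whether some $\fs_{\varepsilon_0}$ is empty: in that branch you deduce strong $m$-spreading directly from the definition via the monotonicity $\diam_m(gU)\geq\diam_m(gB(x,\delta))$ for $B(x,\delta)\subseteq U$, which has the small advantage of using neither closedness, invariance, nor transitivity; in the complementary branch you use openness, invariance and the dense orbit to make every $\fs_{1/n}$ dense open and then invoke the Baire category theorem on the compact metric space $X$. Both routes are valid and of comparable length: the paper's is marginally more economical (density of $\fs$ comes straight from the dense orbit, no Baire needed), while yours isolates the fact that the spreading alternative is a purely definitional consequence of $\fs_{\varepsilon_0}=\emptyset$. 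Your handling of the minimal case (a nonempty open invariant set has closed invariant proper complement, hence equals $X$, so $\fs_\varepsilon=X$ for all $\varepsilon$) matches the paper's in substance.
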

\begin{proof}
	Let $\xg$ be transitive and let $x$ be a transitivity point.

	Assume on the one hand that $x \in \fs$.
	Then $\fs$ is a dense and by Proposition \ref{prop:gdeltaandinvariancefreqstab}
	$\fs$ is a $G_\delta$-set.
	Thus $\fs$ is dense $G_\delta$ and hence $\xg$ is
	almost frequently $m$-stable.

	Assume on the other hand that $x \notin \fs$.
	Then there is $\varepsilon > 0$ such that
	$x \in (\fs_\varepsilon)^c$.
	By the invariance of $(\fs_\varepsilon)^c$, we have $Gx \subseteq (\fs_\varepsilon)^c$.
	Finally, by the closedness, we have $X = \overline{Gx} \subseteq (\fs_\varepsilon)^c$.
	Therefore $\xg$ is strongly $m$-spreading.
	It is in particular clear, that the cases are disjoint.

	Let $\xg$ now be minimal.  Then any $x \in X$ is a transitivity point.
	Assume that there is $\varepsilon > 0$ and $y \in X$ such that $y \in
	(\fs_\varepsilon)^c$.  Then again --- as $y$ has dense orbit --- so $X =
	(\fs_\varepsilon)^c$.  So $\fs = \emptyset$.
\end{proof}
\subsection{Multivariate diam-mean equicontinuity and sensitivity}
\label{DiamMeanEquicontinuityAndSensitivity}

Following ideas in \cite{LiTuYe2015meanequicontinuous}, we study the set of
diam-mean $m$-equicontinuous points, denoted by $\mathcal{DE}^m$.
For every $\varepsilon>0$, let
\begin{align}
	\label{eq:defDE} \de_\varepsilon \ := \ \left\{x\in
	X \ \left| \ \exists\delta>0: \ \sup_{\cF}\limsup_{n\to\infty}\frac{1}{|\cF_n|}\sum_{g\in \cF_n}\operatorname{Diam}_m(g(B(x,\delta)))<\varepsilon\right.\right\}\
	.
\end{align}
Then
	$\de=\cap_{n=1}^\infty\de_{1/n}$.
Note that 
$(\de_\varepsilon)^c$ is the set of diam-mean $\varepsilon$-$m$-sensitivity points.
\begin{prop}\label{prop:equi. point}
	\label{prop:gdeltaandinvariancediammean} Let $\xg$ be a \tds, and
	$\varepsilon>0$.  Then $\de_\varepsilon$ is open and invariant, \ie
	$h(\de_\varepsilon)=\de_\varepsilon$ for any $h\in G$.  Consequently,
	the set of diam-mean $\varepsilon$-$m$-sensitivity points is closed and
	invariant.  Furthermore, as $\de=\cap_{n=1}^\infty\de_{1/n}$ we conclude
	that $\de$ is $G_\delta$.
\end{prop}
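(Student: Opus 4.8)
The plan is to repeat the argument of Proposition~\ref{prop:gdeltaandinvariancefreqstab} almost verbatim, with the upper Banach density replaced by the quantity
\[
\Phi_m(x,\delta)\ :=\ \sup_{\cF}\limsup_{n\to\infty}\frac{1}{|\cF_n|}\sum_{g\in\cF_n}\operatorname{Diam}_m\big(g\,B(x,\delta)\big)\,,
\]
and using the obvious monotonicity $\operatorname{Diam}_m(gA)\le\operatorname{Diam}_m(gA')$ whenever $A\subseteq A'$. So $\de_\varepsilon=\{x\in X\mid \exists\delta>0:\Phi_m(x,\delta)<\varepsilon\}$.

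\emph{Openness.} I would start from $x\in\de_\varepsilon$, fix $\delta>0$ with $\Phi_m(x,\delta)<\varepsilon$, and observe that for any $y\in B(x,\delta/2)$ the triangle inequality gives $B(y,\delta/2)\subseteq B(x,\delta)$; hence $\operatorname{Diam}_m(gB(y,\delta/2))\le\operatorname{Diam}_m(gB(x,\delta))$ for every $g\in G$, so $\Phi_m(y,\delta/2)\le\Phi_m(x,\delta)<\varepsilon$ and $y\in\de_\varepsilon$. Thus $\de_\varepsilon$ is open.

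\emph{Invariance.} Fix $h\in G$ and assume $hx\notin\de_\varepsilon$, i.e.\ $\Phi_m(hx,\delta')\ge\varepsilon$ for every $\delta'>0$; I want to conclude $x\notin\de_\varepsilon$. Given $\delta>0$, continuity of the homeomorphism $h$ provides $\eta_\delta>0$ with $B(hx,\eta_\delta)\subseteq hB(x,\delta)$. For an arbitrary \Folner sequence $\cG$, the sequence $\cF_n:=\cG_n h$ is again \Folner and satisfies $|\cF_n|=|\cG_n|$ (right translation by $h$ is a bijection and commutes with $\triangle$ up to cardinality, exactly as already used in the proof of Proposition~\ref{prop:gdeltaandinvariancefreqstab}). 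Re-indexing the sum via $g=g'h$ yields
\[
\frac{1}{|\cF_n|}\sum_{g\in\cF_n}\operatorname{Diam}_m\big(gB(x,\delta)\big)\ =\ \frac{1}{|\cG_n|}\sum_{g'\in\cG_n}\operatorname{Diam}_m\big(g'hB(x,\delta)\big)\ \ge\ \frac{1}{|\cG_n|}\sum_{g'\in\cG_n}\operatorname{Diam}_m\big(g'B(hx,\eta_\delta)\big)\,.
\]
Taking $\limsup_{n\to\infty}$ and then $\sup$ over all \Folner sequences $\cG$ on the right — while bounding the left-hand side by $\Phi_m(x,\delta)$ — gives $\Phi_m(x,\delta)\ge\Phi_m(hx,\eta_\delta)\ge\varepsilon$. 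Since $\delta>0$ was arbitrary, $x\notin\de_\varepsilon$. By contraposition $h(\de_\varepsilon)\subseteq\de_\varepsilon$, and applying this with $h^{-1}$ in place of $h$ gives the reverse inclusion, so $h(\de_\varepsilon)=\de_\varepsilon$. Consequently $(\de_\varepsilon)^c$, the set of diam-mean $\varepsilon$-$m$-sensitivity points, is closed and invariant. Finally, one checks $\de=\bigcap_{n=1}^\infty\de_{1/n}$ (the inclusion $\subseteq$ is immediate; conversely, if $x\in\de_{1/n}$ for all $n$, then for any $\varepsilon>0$ choosing $n$ with $1/n<\varepsilon$ the witnessing $\delta$ for $\de_{1/n}$ also witnesses $\de_\varepsilon$), so $\de$ is a countable intersection of open sets and hence $G_\delta$.

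I do not expect a genuine obstacle here: the only point requiring care is the bookkeeping in the translation step — verifying that right-translates of \Folner sequences are \Folner and that cardinalities are preserved, and applying the supremum over \Folner sequences on the correct side of the inequality (each term of the $hx$-sum is dominated appropriately after translation, and each term of the $x$-sum is dominated by $\Phi_m(x,\delta)$). Everything else reduces to the monotonicity of $\operatorname{Diam}_m$ and the elementary set identity $\de=\bigcap_n\de_{1/n}$.
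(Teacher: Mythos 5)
Your proof is correct, and its skeleton (openness via nested balls, invariance by contraposition through translated \Folner sequences, $G_\delta$ from $\de=\bigcap_n\de_{1/n}$) matches the paper's; the openness and $G_\delta$ parts are essentially verbatim the same. The invariance step, however, is handled differently, and in a way worth noting. The paper fixes a single \Folner sequence $\cF$ witnessing the sensitivity of $hx$, reindexes the sum at $x$ over $\cF_n h^{-1}$, and then has to compare the averages over $\cF_n h^{-1}$ with those over $\cF_n$; this comparison is justified there by an estimate of the form $\diam_m(X)\cdot\limsup_n |\cF_n\triangle\cF_n h^{-1}|/|\cF_n|=0$, which amounts to asymptotic invariance of $\cF$ under \emph{right} translation by $h^{-1}$ --- a property that does not follow from the (left) \Folner condition as defined in the paper for a general amenable group. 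You avoid this comparison entirely: you keep the supremum over all \Folner sequences on the $hx$-side, push an arbitrary \Folner sequence $\cG$ forward to $(\cG_n h)_n$ on the $x$-side, and only use that right translates of left \Folner sequences are again left \Folner with the same cardinalities, plus monotonicity of $\operatorname{Diam}_m$. This makes your translation bookkeeping both shorter and more robust at exactly the step the paper labels as critical, while the paper's version has the (minor) advantage of exhibiting the inequality along one fixed sequence. Your argument, like the paper's intended one, yields $\Phi_m(x,\delta)\geq\Phi_m(hx,\eta_\delta)\geq\varepsilon$ for every $\delta>0$ and hence the desired invariance.
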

\begin{proof}
	For any $x\in \de_\varepsilon$, we choose $\delta>0$ --- as in the
	definition of $\de_\varepsilon$ in Equation (\ref{eq:defDE}) --- such
	that \[ \sup_{\cF}\limsup_{n\to\infty}\frac{1}{|\cF_n|}\sum_{g\in\cF_n} \diam_m(g(B(x,\delta)))<\varepsilon\,.  \]
	Note that for any $y\in B(x,\delta/2)$, we have $B(y,\delta/2)\subset
	B(x,\delta)$. Therefore, we
	obtain \[\sup_{\cF}\limsup_{n\to\infty}\frac{1}{|\cF_n|}\sum_{g\in \cF_n}\operatorname{Diam}_m(g(B(y,\delta/2)))\
	<\ \varepsilon\ .\] Thus $y\in \de_\varepsilon$, and hence
	$\de_\varepsilon$ is open.
			
	In order to show the invariance let $h \in G$ be arbitrary and assume
	that $x \in X$ such that $hx\notin\de_\varepsilon$.  We will show
	$x\notin\fs_\varepsilon$.  As $h: X \to X$ is a homeomorphism
	$hB(x,\delta)$ is open for any $\delta > 0$.  Hence, for any $\delta >
	0$ there exists $\eta_\delta >0$ such that $B(hx,\eta_\delta)\subset
	hB(x,\delta)$.

	By assumption there is a \Folner sequence $\cF$ such
	that \[ \limsup_{n\to\infty}\frac{1}{|\cF_n|}\sum_{g\in \cF_n}\diam_m(g(B(hx,\delta))) \ \geq \ \varepsilon\]
	for any $\delta > 0$.  Let $\delta > 0$ be arbitrary,
	then \begin{eqnarray} \nonumber \lefteqn{\limsup_{n\to\infty}\frac{1}{|\cF_n|}\sum_{g\in \cF_n}\operatorname{Diam}_m(g(B(x,\delta))) \
	} \\ & =
	& \nonumber \limsup_{n\to\infty}\frac{1}{|\cF_n|}\sum_{g\in \cF_nh^{-1}}\operatorname{Diam}_m(gh(B(x,\delta))) \\
	& \geq
	& \limsup_{n\to\infty}\frac{1}{|\cF_n|}\sum_{g\in\cF_nh^{-1}}\operatorname{Diam}_m(g(B(hx,\eta_\delta))) \\\nonumber
	&= & \limsup_{n\to\infty} \frac{1}{|\cF_n
	h^{-1}|} \sum_{g\in\cF_nh^{-1}} \operatorname{Diam}_m(g(B(hx,\eta_\delta))) \label{eq:criticalstepfolner} \\\nonumber
	& = & \limsup_{n\to\infty} \frac{1}{|\cF_n
	h^{-1}|} \sum_{g\in\cF_nh^{-1}} \operatorname{Diam}_m(g(B(hx,\eta_\delta))) \
	> \ \varepsilon \,.  \end{eqnarray} In order to justify the step in
	(\ref{eq:criticalstepfolner}) we use the \Folner property of $\cF$. A
	straightforward calculation yields \begin{align*}
	&\limsup_{n\to\infty} \frac{1}{|\cF_n
	h^{-1}|} \sum_{g\in\cF_nh^{-1}} \operatorname{Diam}_m(g(B(hx,\eta_\delta)))
	- \limsup_{n\to\infty} \frac{1}{|\cF_n|} \sum_{g\in\cF_n} \operatorname{Diam}_m(g(B(hx,\eta_\delta))) \\ \leqslant
	& \limsup_{n\to\infty} \frac{1}{|\cF_n
	h^{-1}|} \sum_{g\in\cF \triangle \cF_nh^{-1}} \operatorname{Diam}_m(g(B(hx,\eta_\delta))) \\ \leqslant
	& \diam_m(X) \cdot \limsup_{n\to\infty} \frac{|\cF \triangle \cF_nh^{-1}|}{|\cF_n
	h^{-1}|} \  = \ 0 \,.  \end{align*} So $x \notin \fs_\varepsilon$.  By
	contraposition we obtain that $x \in \fs_\varepsilon$ implies
	$hx \in \fs_\varepsilon$ --- \ie the desired invariance.  Replacing $x$
	by $h(x)$ and $h$ by $h^{-1}$, we also obtain the reverse inclusion and
	the invariance of $(\de_\varepsilon)^c$.
\end{proof}

\begin{prop}[Auslander-Yorke Dichotomy for Diam-Mean Equicontinuity]
	\label{prop2}
	\label{prop:auslanderyorkediammean}
	Let $\xg$ be a transitive \tds.
	Then $\xg$ is either almost diam-mean $m$-equicontinuous
	--- \ie $\de$ is a dense $G_\delta$-set ---
	or $\xg$ is diam-mean $m$-sensitive --- \ie $\de = \emptyset$.

	If in addition, $\xg$ is minimal, then either $\xg$ is diam-mean $m$-equicontinuous
	--- \ie $\de = X$ ---
	or $\xg$ is diam-mean $m$-sensitive
	--- \ie $\de=\emptyset$.
\end{prop}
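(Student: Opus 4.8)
The plan is to copy the structure of the proof of Proposition~\ref{prop:auslanderyorkefreqstab} almost verbatim, with $\fs_\varepsilon$ replaced by $\de_\varepsilon$ and Proposition~\ref{prop:gdeltaandinvariancefreqstab} replaced by Proposition~\ref{prop:gdeltaandinvariancediammean}. So suppose $\xg$ is transitive and fix a transitivity point $x\in X$. Since $\de=\bigcap_{n\in\N}\de_{1/n}$ is a countable intersection of open sets it is automatically $G_\delta$, and since each $\de_{1/n}$ is $G$-invariant by Proposition~\ref{prop:gdeltaandinvariancediammean}, so is $\de$. Now split into two cases according to whether $x\in\de$.

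If $x\in\de$, then $Gx\subseteq\de$ by invariance, and as $x$ has dense orbit $\de$ is dense; being also $G_\delta$, it is residual, so $\xg$ is almost diam-mean $m$-equicontinuous. If instead $x\notin\de$, then $x\notin\de_\varepsilon$ for some $\varepsilon=1/n>0$, \ie $x\in(\de_\varepsilon)^c$. By Proposition~\ref{prop:gdeltaandinvariancediammean} the set $(\de_\varepsilon)^c$ is closed and invariant, hence $X=\overline{Gx}\subseteq(\de_\varepsilon)^c$; thus $\de_\varepsilon=\emptyset$, so $\de=\emptyset$, and every nonempty open $U\subseteq X$ satisfies the diam-mean $\varepsilon$-$m$-sensitivity inequality~(\ref{e.diam_mean_sensitivity}), \ie $\xg$ is diam-mean $m$-sensitive. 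The two alternatives exclude each other since $\de$ cannot be at once empty and dense. When $\xg$ is moreover minimal, every point is a transitivity point: if $\de_\varepsilon\neq X$ for some $\varepsilon$, pick $y\notin\de_\varepsilon$ and argue as above to get $X=(\de_\varepsilon)^c$, hence $\de=\emptyset$ and diam-mean $m$-sensitivity; otherwise $\de_\varepsilon=X$ for all $\varepsilon$, so $\de=X$ and $\xg$ is diam-mean $m$-equicontinuous.

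I expect no real obstacle here, since all the substance is already contained in Proposition~\ref{prop:gdeltaandinvariancediammean} (openness and invariance of $\de_\varepsilon$, equivalently closedness and invariance of the sensitivity set). The one point to state carefully is the translation between the pointwise and global formulations — that $\de=X$ indeed yields diam-mean $m$-equicontinuity, and that $\de$ dense $G_\delta$ yields \emph{almost} diam-mean $m$-equicontinuity — which rests on the compactness remark made right after the definition of a diam-mean $m$-equicontinuity point in the introduction: a uniform $\delta$ can be extracted from the pointwise $\delta$'s via a finite subcover of $X$. For the minimal, diam-mean $m$-equicontinuous branch one may alternatively observe directly that each $\de_\varepsilon$ is open and invariant, hence its closed invariant complement is either all of $X$ or empty, so in a minimal system $\de_\varepsilon\in\{\emptyset,X\}$ and consequently $\de\in\{\emptyset,X\}$.
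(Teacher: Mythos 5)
Your proposal is correct and follows essentially the same route as the paper: the same case split on whether the transitivity point lies in $\de$, using the openness, closedness and invariance statements of Proposition~\ref{prop:gdeltaandinvariancediammean}, exactly as the paper's proof (which itself mirrors Proposition~\ref{prop:auslanderyorkefreqstab}). Your additional remark on passing from $\de=X$ to global diam-mean $m$-equicontinuity via compactness is the same observation the paper makes after defining equicontinuity points, so there is no substantive difference.
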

The proof is analogous to the proof of
	Proposition \ref{prop:auslanderyorkefreqstab}. We include it for the
	convenience of the reader.  
	\begin{proof} Let $\xg$ be transitive with a transitive point $x\in X$.

		Assume on the one hand that $x \in \de$.  Then $\de$ is a dense
		and by Proposition \ref{prop:gdeltaandinvariancediammean} $\de$
		is a $G_\delta$-set.  Thus $\de$ is dense $G_\delta$ and hence
		$\xg$ is almost diam-mean $m$-equicontinuous.

		Assume on the other hand that $x \notin \de$.
		Then there is $\varepsilon > 0$ such that
		$x \in (\de_\varepsilon)^c$.
		By the invariance of $(\de_\varepsilon)^c$, we have $Gx \subseteq (\de_\varepsilon)^c$.
		Finally, by the closedness, we have $X = \overline{Gx} \subseteq (\de_\varepsilon)^c$.
		Therefore $\xg$ is diam-mean $m$-sensitive.
		It is in particular clear, that the cases are disjoint.

		Let $\xg$ now be minimal.
		Then any $x \in X$ is a transitivity point.
		Assume that there is $\varepsilon > 0$ and $y \in X$ such that $y \in (\de_\varepsilon)^c$.
		Then again --- as $y$ has dense orbit --- so $X = (\de_\varepsilon)^c$.
		So $\de = \emptyset$.
	\end{proof}

\section{Multivariate diam mean equicontinuity and multivariate regularity}\label{sec:4}
The following result relates diam mean $m$-equicontinuity and $m$-regularity.
In Section~\ref{WeaklyMeanSensitiveTuples}, we also prove the converse under the
assumption of a local Bronstein condition. 
In the abelian case, the same results
have been established in \cite{Lino2025}. 
\begin{thm}\label{thm:regular m to one=>m-equi}
	Let $(X,G)$ be a minimal \tds. If $\pi_{\eq}$ is regular $m$-to-one, then $(X,G)$
	is diam-mean $(m+1)$-equicontinuous.
\end{thm}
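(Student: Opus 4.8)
The plan is to push the problem down to the maximal equicontinuous factor $(X_{\eq},G)$, where unique ergodicity is available, and to turn $m$-regularity of $\pi_{\eq}$ into an abundance of \emph{good base points}. Fix a $G$-invariant metric $\rho$ on $X_{\eq}$ (possible since $X_{\eq}$ is equicontinuous) and let $\mu_{\eq}$ be its unique invariant Borel probability measure; $\mu_{\eq}$ has full support because $X_{\eq}$ is minimal. The fibre-cardinality function $z\mapsto\#\pi_{\eq}^{-1}(z)$ is upper semicontinuous, so $\{z:\#\pi_{\eq}^{-1}(z)<m\}$ is open; being $\mu_{\eq}$-null by $m$-regularity, it is empty. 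Hence $\#\pi_{\eq}^{-1}(z)\geq m$ for every $z\in X_{\eq}$, with equality for $\mu_{\eq}$-a.e.\ $z$.

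Fix $\varepsilon>0$ and put $F_\kappa(z):=\diam_{m+1}\!\big(\pi_{\eq}^{-1}(\overline{B(z,\kappa)})\big)$ for $\kappa>0$. A compactness argument shows that $F_\kappa:X_{\eq}\to[0,\diam(X)]$ is upper semicontinuous, non-decreasing in $\kappa$, and that $F_\kappa(z)\downarrow\diam_{m+1}(\pi_{\eq}^{-1}(z))$ as $\kappa\downarrow0$; since $\#\pi_{\eq}^{-1}(z)=m$ for $\mu_{\eq}$-a.e.\ $z$, this limit is $0$ a.e., so $\int F_\kappa\,d\mu_{\eq}\to0$ by monotone convergence. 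Consequently the sets $W_\kappa:=\{z:F_\kappa(z)<\varepsilon\}$ are open, $G$-invariant (using $g\,\overline{B(z,\kappa)}=\overline{B(gz,\kappa)}$ and $g\,\pi_{\eq}^{-1}(S)=\pi_{\eq}^{-1}(gS)$), and $\mu_{\eq}(W_\kappa)\to1$ as $\kappa\to0$.

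Now let $U\subset X$ with $\diam_{m+1}(U)<\delta$. Consider first the easy case in which $\pi_{\eq}(U)\subseteq\overline{B(z_0,\kappa)}$ for a single $z_0$. Then $gU\subseteq\pi_{\eq}^{-1}(\overline{B(gz_0,\kappa)})$, so $\diam_{m+1}(gU)\leq F_\kappa(gz_0)$, and since $F_\kappa$ is bounded and upper semicontinuous, unique ergodicity of $(X_{\eq},G)$ gives $\limsup_n\tfrac1{|\cF_n|}\sum_{g\in\cF_n}F_\kappa(gz_0)\leq\int F_\kappa\,d\mu_{\eq}$ uniformly over all \Folner sequences $\cF$; hence $\sup_{\cF}\limsup_n\tfrac1{|\cF_n|}\sum_{g\in\cF_n}\diam_{m+1}(gU)\leq\int F_\kappa\,d\mu_{\eq}<\varepsilon$ once $\kappa$ is small. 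In general $\diam_{m+1}(U)<\delta$ only forces $\diam_{m+1}(\pi_{\eq}(U))$ to be small; but a greedy selection writes $U\subseteq\bigcup_{i=1}^{k}B(u_i,\delta)$ with $k\le m$ and the $u_i$ pairwise $\delta$-separated, and grouping the images $\pi_{\eq}(u_i)$ into their $\rho$-clusters splits $U=\bigsqcup_{s}U^{(s)}$ so that each $\pi_{\eq}(U^{(s)})$ has small diameter and $\sum_s m_s\le m$, where $m_s$ is the number of $\delta$-clusters of $U^{(s)}$. The task is then to show that, for a set of $g$ of full density along every \Folner sequence, each $gU^{(s)}$ is covered by at most $m_s$ sets of diameter $<\varepsilon$, so that $gU$ is covered by at most $\sum_s m_s\le m$ such sets and therefore $\diam_{m+1}(gU)<2\varepsilon$. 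This is obtained as in the easy case applied to each $U^{(s)}$, using in addition that—because the metric on $X_{\eq}$ is $G$-invariant—$g$ maps the at most $m_s$ local \emph{sheets} of $\pi_{\eq}^{-1}$ over the cluster of $U^{(s)}$ occupied by $U^{(s)}$ to at most $m_s$ local sheets over the translated cluster, and that for $\mu_{\eq}$-most base points these sheets are small (this last being again the content of $\mu_{\eq}(W_\kappa)\to1$). Finally, bounding $\diam_{m+1}(gU)$ by $\diam(X)$ on the exceptional set of $g$—whose density along every \Folner sequence is $o_\kappa(1)$, by unique ergodicity applied to the upper semicontinuous indicator of the closed complement of the good set—and by $2\varepsilon$ otherwise, then letting $\kappa\to0$ and shrinking $\delta$ accordingly (via uniform continuity of $\pi_{\eq}$), yields $\sup_{\cF}\limsup_n\tfrac1{|\cF_n|}\sum_{g\in\cF_n}\diam_{m+1}(gU)<3\varepsilon$. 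As $\varepsilon$ was arbitrary, $(X,G)$ is diam-mean $(m+1)$-equicontinuous.

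The main obstacle is exactly the passage from the single-cluster case to the general one: an individual small ball $B(u_i,\delta)$ can be badly distorted by $g$, so one cannot argue pointwise in $g$; the correct statement is that $g$ respects the decomposition of $\pi_{\eq}^{-1}$ of a small ball into $m$ \emph{sheets}, and that a sheet is stretched beyond size $\varepsilon$ only when $g$ carries the relevant base cluster into the $\mu_{\eq}$-small complement of a good set. Turning the informal sheet bookkeeping into a rigorous argument—controlling the total number of $\varepsilon$-scale pieces of $gU$, and verifying that the requisite good base points (locally split into $m$ well-separated sheets) form a set of full $\mu_{\eq}$-measure, equivalently that the discontinuity locus of the fibre map $z\mapsto\pi_{\eq}^{-1}(z)$ is $\mu_{\eq}$-null—is the technical heart of the proof, everything else being the routine MEF/unique-ergodicity machinery sketched above.
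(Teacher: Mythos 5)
Your single-cluster case (what you call the \enquote*{easy case}) is correct and is essentially the paper's own argument: one pushes everything to the \mef, uses $m$-regularity together with upper semicontinuity of the fibre map to make the relevant quantity small off a set of small $\mu_{\eq}$-measure, and then controls the \Folner averages by unique ergodicity of $(X_{\eq},G)$. Your variant --- integrating the upper semicontinuous function $F_\kappa$ against $\mu_{\eq}$ via continuous majorants --- is in fact a clean alternative to the paper's route through a compact set $K$ and the pointwise ergodic theorem along tempered \Folner subsequences. Two side remarks: the fibre-\emph{cardinality} function is not upper semicontinuous (only the set-valued map $z\mapsto\pi_{\eq}^{-1}(z)$ is), so your argument that $\{z:\#\pi_{\eq}^{-1}(z)<m\}$ is empty does not work, and indeed an $m$-regular extension may have fibres with fewer than $m$ points; fortunately neither claim is needed in the easy case.

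The genuine gap is the multi-cluster step, and it cannot be closed, because the statement you are aiming at there --- the bound for \emph{all} sets $U$ with $\diam_{m+1}(U)<\delta$, i.e.\ for unions of up to $m$ small clusters placed anywhere in $X$ --- is false. Your covering claim (\enquote*{for a full-density set of $g$, each $gU^{(s)}$ is covered by at most $m_s$ sets of diameter $<\varepsilon$}) already fails for $m_s=1$: it would say that the image of a single small ball has diameter $<\varepsilon$ for a set of $g$ of full density along every \Folner sequence, which contradicts the diam-mean $2$-sensitivity of any regular $m$:1 but not almost automorphic extension (Theorem~\ref{thm:m to 1<=> tuple}); relatedly, $g$ does not map \enquote*{sheets to sheets}, since the metric on $X$ --- unlike the one on $X_{\eq}$ --- is not invariant. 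Concretely, in the Thue--Morse system ($m=2$) take $U=B(u_1,\delta')\cup B(u_2,\delta')$ with $\pi_{\eq}(u_1)\neq\pi_{\eq}(u_2)$: then $\diam_3(U)\le 2\delta'$, but if $\varepsilon_0$ is a diam-mean $2$-sensitivity constant, there is a \Folner sequence along which a positive-upper-density set of $g$ satisfies $\diam_2(gB(u_1,\delta'))\ge\varepsilon_0/2$, while every point of $gB(u_2,\delta')$ stays at a distance bounded below --- uniformly in $g$ and $\delta'$ --- from $gB(u_1,\delta')$, by invariance of the \mef metric and uniform continuity of $\pi_{\eq}$; hence the averaged $\diam_3(gU)$ is bounded away from $0$ independently of $\delta'$. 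The resolution is that the smallness hypothesis is to be read as a bound on the \emph{ordinary} diameter of $U$ (as in \cite{Lino2025}, as in the definition of frequent $m$-stability in this paper, and as in the paper's own proof, which only ever considers balls $B(x,\eta)$); with that reading your easy case already constitutes a complete proof, and the \enquote*{sheet bookkeeping} you describe as the technical heart is not needed.
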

\begin{proof}
By the above, we only need to prove that any $x\in X$ is a diam-mean
$m$-equicontinuous point. For convenience, we assume without loss of generality
that $\diam(X)=1$.
	
For any $\varepsilon>0$, since $\pi_{\eq}$ is $m$-regular, we can choose a
compact subset $K$ of $X_{\eq}$ with $\mu_{\eq}(K)>1-\varepsilon/3$ and
$|\pi_{\eq}^{-1}(y)|=m$ for any $y\in K$. Since $\pi_{\eq}^{-1}$ is upper
semi-continuous, there exists $\delta>0$ and $z_1,\ldots z_l\in K$ such that
$K\subset \cup_{i=1}^lB(z_i,\delta)$ and $\pi_{\eq}^{-1}(B(z_i,2\delta))\subset
B(\pi_{\eq}^{-1}({z_i}),\varepsilon/3)$. Now we choose $\eta>0$ such that
$\mathrm{diam}(\pi_{\eq}(B(x,\eta)))<\delta$ holds for any $x\in X$. Notice that
for any $x\in X$, if $g\in G$ such that $g\pi_{\eq}(x)=\pi_{\eq}(gx)\in K$,
there exists $j\in\{1,2,\ldots,l\}$ such that $\pi_{\eq}(gx)\in
B(z_i,\delta)$. Since
$\mathrm{diam}(g\pi_{\eq}(B(x,\eta)))=\mathrm{diam}(\pi_{\eq}(B(x,\eta)))<\delta$,
it follows that $\pi_{\eq}(gB(x,\eta))\subset B(z_i,2\delta),$. This further
implies that
\[
gB(x,\eta)\ \subset\ \pi_{\eq}^{-1}( B(z_i,2\delta)) \ \subset
\ B(\pi_{\eq}^{-1}({z_i}),\varepsilon/3) \ .
\]
As $|\pi^{-1}(z_i)|=m$, it follows that $\diam_{m+1}(gB(x,\eta))<2\varepsilon/3$. Thus,
we deduce that
\begin{eqnarray} \nonumber
	\lefteqn{\limsup_{n\to\infty}\frac{1}{|\cF_n|}\sum_{g\in
            \cF_n}\operatorname{Diam}_{m+1}(gB(x,\eta))}\\ &= & \label{eq:14}
        \limsup_{n\to\infty}\frac{1}{|\cF_n|}\left(\sum_{g\in \cF_n,\pi(gx)\in
          K}\operatorname{Diam}_{m+1}(gB(x,\eta))+\sum_{g\in \cF_n,\pi(gx)\notin
          K}\operatorname{Diam}_{m+1}(gB(x,\eta))\right)\\ &= &
        \frac{2\varepsilon}{3}\cdot \nonumber
        \limsup_{n\to\infty}\frac{1}{|\cF_n|}\left|\left\{g\in \cF_n:g\pi(x)\in
        K\right\}\right| \\ & & + \ \diam(X)\cdot
        \limsup_{n\to\infty}\frac{1}{|\cF_n|}\left|\left\{g\in \cF_n:g\pi(x)\notin
        K\right\}\right| \ .\nonumber
\end{eqnarray}
Since $(X_{\eq},G)$ is uniquely ergodic, by the Birkhoff Ergodic Theorem (by
Lindenstrauss \cite{L01}), for any tempered \Folner sequence
$\F=\{\cF_n\}_{n\in\N}$, \eqref{eq:14} implies that
\[\limsup_{n\to\infty}\frac{1}{|\cF_n|}\sum_{g\in \cF_n}\operatorname{Diam}_{m+1}(gB(x,\eta))
\le 2\varepsilon\cdot\mu_{\eq}(K)+(1-\mu_{\eq}(K))<\varepsilon \ . \] As any \Folner
sequence has a tempered \Folner subsequence \cite{L01}, this completes the
proof.
	\end{proof}

We now turn to the notion of diam mean sensitivity.
	
\begin{prop}\label{prop:transitive ponit}
Let $(X,T)$ be a \tds with a transitive point $x$. If $x$ is diam-mean
$m$-sensitive then $(X,T)$ is diam-mean $m$-sensitive.
\end{prop}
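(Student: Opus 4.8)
The plan is to run exactly the argument from the second half of Proposition~\ref{prop:auslanderyorkediammean}, which only uses transitivity \emph{at the point $x$} together with the closedness and invariance of the sensitivity sets. Since $x$ is diam-mean $m$-sensitive, by definition there is an $\varepsilon>0$ such that $(\ref{e.diam_mean_sensitivity})$ holds for every open neighbourhood of $x$, i.e. $x\in(\de_\varepsilon)^c$.

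Next I would invoke Proposition~\ref{prop:gdeltaandinvariancediammean}, which states that $(\de_\varepsilon)^c$ is closed and $G$-invariant. Invariance yields $Gx\subseteq(\de_\varepsilon)^c$, and since $x$ is a transitive point we have $X=\overline{Gx}$; closedness then forces $X=(\de_\varepsilon)^c$. Hence \emph{every} point of $X$ is a diam-mean $\varepsilon$-$m$-sensitivity point.

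Finally I would translate this pointwise statement into the global definition. Let $U\subseteq X$ be an arbitrary non-empty open set, pick $y\in U$ and choose $\delta>0$ with $B(y,\delta)\subseteq U$. Since $y\in(\de_\varepsilon)^c$ and $B(y,\delta)$ is an open neighbourhood of $y$, we obtain $\sup_{\cF}\limsup_{n\to\infty}\frac{1}{|\cF_n|}\sum_{g\in\cF_n}\diam_m(gB(y,\delta))\ \geq\ \varepsilon$. As $gB(y,\delta)\subseteq gU$ for every $g\in G$ and $\diam_m$ is monotone under inclusion, the same inequality holds with $U$ in place of $B(y,\delta)$, which is precisely $(\ref{e.diam_mean_sensitivity})$ for this $\varepsilon$. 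Since $U$ was arbitrary, $(X,T)$ is diam-mean $m$-sensitive. There is no genuine obstacle in this proof; the only point requiring a moment's care is the passage from neighbourhoods of a single point to arbitrary non-empty open sets, which is handled by the monotonicity of the $m$-diameter.
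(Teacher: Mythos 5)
Your proof is correct, but it takes a different route from the one the paper uses for this proposition. You deduce the statement from the structural facts already established in Proposition~\ref{prop:gdeltaandinvariancediammean}: since $x$ is a diam-mean $m$-sensitive point, $x\in(\de_\varepsilon)^c$ for some $\varepsilon>0$; invariance and closedness of $(\de_\varepsilon)^c$ together with density of $Gx$ give $X=(\de_\varepsilon)^c$, and the monotonicity of $\diam_m$ under inclusion converts this pointwise statement into the global definition~(\ref{e.diam_mean_sensitivity}). This is essentially the second half of the dichotomy argument in Proposition~\ref{prop:auslanderyorkediammean}, with the (correct and worth spelling out) extra step from balls around points to arbitrary nonempty open sets. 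The paper instead gives a short direct argument: fix a nonempty open $V$, use transitivity to find $h\in G$ with $hx\in V$, choose a neighbourhood $U$ of $x$ with $hU\subset V$, and compare the averages over $\cF_n$ and over the translated \Folner sequence $(\cF_n h)_{n\in\N}$, using $\diam_m(g h U)\leq\diam_m(gV)$. The underlying mechanism is the same in both cases --- translation invariance of the supremum of \Folner averages --- but you import it via the invariance of $\de_\varepsilon$, whereas the paper redoes the translation explicitly and so stays self-contained within Section~4. Your version has the mild advantage of making the uniform sensitivity constant $\varepsilon$ appear through the already proven closed-invariant structure; the paper's version avoids any reliance on Section~3 and exhibits the transfer of the constant from $x$ to $V$ directly. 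Both are complete proofs.
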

\begin{proof}
Since $x$ is diam-mean $m$-sensitive, there exists $\varepsilon>0$ such that for any
open neighborhood $U$ of $x$,
\[
\sup_{\cF}\limsup_{n\to\infty}\frac{1}{|\cF_n|}\sum_{g\in
  \cF_n}\operatorname{Diam}_m(g(U)) \ \geq \ \varepsilon \ .
\]
Fix a nonempty open subset $V$ of $X$. As $x$ is transitive, there exists $h\in
G$ such that $hx\in V$. By continuity of $h$, we can find an open neighborhood
$U$ of $x$ such that $hU\subset V$. Thus, we have
\begin{align*}
	\sup_{\cF} \limsup_{n\to\infty}\frac{1}{|\cF_n|}\sum_{g\in
          \cF_n}\operatorname{Diam}_m(g(V)) \ \ge
        \ \sup_{\cF}\limsup_{n\to\infty}\frac{1}{|\cF_nh|}\sum_{g\in
          \cF_nh}\operatorname{Diam}_m(g(U)) \ > \ \varepsilon.
\end{align*}
\end{proof}



\section{Weakly mean sensitive tuples}\label{WeaklyMeanSensitiveTuples}

\subsection{Multivariate diam mean sensitivity and weakly mean sensitive tuples}
\label{MDiamMeanSensitivityAndTuples}

Following ideas in \cite[Theorem 4.4]{LiYeYu2022Equiinthemean}, we first prove
the following result.
\begin{thm}\label{lem:wealy tuple vs diammean sensitive}
Let $(X,G)$ be a \tds and $m\in\N$. If $\xg$ has an essential weakly
mean-sensitive $m$-tuple, then $(X,G)$ is diam-mean $m$-sensitive. If we further
assume that $\xg$ is transitive, then the converse direction also holds.
\end{thm}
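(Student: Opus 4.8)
The plan is to prove the two implications separately, following the structure of Li–Ye–Yu's argument on mean sensitive tuples.

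\medskip

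\noindent\textbf{Direction 1: essential weakly mean-sensitive $m$-tuple $\Rightarrow$ diam-mean $m$-sensitive.}
Let $(x_1,\ldots,x_m)$ be an essential weakly mean-sensitive $m$-tuple. Since the $x_k$ are pairwise distinct, I would fix pairwise disjoint open neighbourhoods $U_k \ni x_k$, and set $\varepsilon_0 := \tfrac{1}{2}\min_{i\neq j} d(U_i,U_j) > 0$ (using compactness / shrinking to closed balls to make this a positive number). By the definition of weakly mean-sensitive tuple, there is $\eta > 0$ such that for every nonempty open $U \subseteq X$,
\[
\uBD\bigl(\{ g \in G : (gU)\cap U_k \neq \emptyset \text{ for } 1\le k\le m \}\bigr) > \eta .
\]
Now for any $g$ in this set, $gU$ meets all $m$ of the pairwise $\varepsilon_0$-separated sets $U_k$, so $\operatorname{Diam}_m(gU) \ge D_m$ of the chosen points $\ge 2\varepsilon_0$; hence $\operatorname{Diam}_m(gU) \ge \varepsilon_0$ for all such $g$. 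Picking a \Folner sequence $\cF$ witnessing the upper Banach density, one gets $\limsup_n \tfrac{1}{|\cF_n|}\sum_{g\in\cF_n}\operatorname{Diam}_m(gU) \ge \varepsilon_0 \eta$. Since this holds for every nonempty open $U$ with the uniform constant $\varepsilon_0\eta$, the system is diam-mean $m$-sensitive. This direction is essentially bookkeeping and does not use transitivity.

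\medskip

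\noindent\textbf{Direction 2 (converse, under transitivity): diam-mean $m$-sensitive $\Rightarrow$ essential weakly mean-sensitive $m$-tuple exists.}
Fix the sensitivity constant $\varepsilon > 0$. The idea is to produce the tuple as a limit. Fix a transitive point $x_0$ and a countable basis $\{V_\ell\}$ of nonempty open sets. For each $\ell$, diam-mean $m$-sensitivity applied to $V_\ell$ gives a \Folner sequence and a positive-density set of $g$ with $\operatorname{Diam}_m(gV_\ell) > \varepsilon/2$; for each such $g$ choose $m$ points in $gV_\ell$ that are pairwise $\ge \varepsilon/2$ apart. Passing to the closure and using a diagonal/compactness argument over $\ell$ together with the fact that the collection of "$m$-tuples realizable as such limits" is closed and orbit-invariant, I would extract an $m$-tuple $(x_1,\ldots,x_m)$ with $d(x_i,x_j)\ge \varepsilon/2$ for $i\neq j$ (hence essential) which lies in the regionally proximal $m$-relation $\cQ_m$, or more directly which satisfies the weakly mean-sensitive property. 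Concretely: the key observation is that if a tuple $(x_1,\dots,x_m)$ has the property that for every neighbourhoods $U_k$ and every nonempty open $U$ there is a \emph{positive-density} (not just nonempty) set of $g$ with $gU\cap U_k\neq\emptyset$ for all $k$, then it is weakly mean-sensitive; and diam-mean $m$-sensitivity with constant $\varepsilon$ forces, for every open $U$, the existence of \emph{some} tuple with $\varepsilon/2$-separated coordinates "visited by $gU$" along a positive-density set of $g$. Using transitivity to move an arbitrary open $U$ into a fixed neighbourhood basis around $x_0$, plus a compactness argument to make the witnessing tuple independent of $U$ (this is where one genuinely needs to be careful), yields a single essential tuple that works for all $U$.

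\medskip

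\noindent\textbf{Main obstacle.} The delicate point is the uniformisation in Direction 2: diam-mean $m$-sensitivity a priori gives, for \emph{each} open set $U$, an $\varepsilon/2$-separated configuration and a positive-density return set, but the configuration and the density bound may depend on $U$. One must extract a \emph{single} essential $m$-tuple $(x_1,\dots,x_m)$ and a \emph{single} $\eta>0$ that works simultaneously for all nonempty open $U$. The way around this is to use transitivity (so every open $U$ contains a translate $hW$ of a basis element $W$ near the transitive point, and $\operatorname{Diam}_m(g\,hW)$ estimates transfer via the \Folner-invariance arguments already established in Section~\ref{AuslanderYorke}), combined with a compactness argument in $X^m$: the set of "bad" tuples forms a closed invariant set, and one shows it must contain an essential tuple by a density pigeonhole on the $\varepsilon/2$-separated configurations, analogous to \cite[Theorem 4.4]{LiYeYu2022Equiinthemean}. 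I expect this compactness-plus-transitivity step to be the core of the argument; the density arithmetic (intersections of positive-Banach-density sets along a common \Folner sequence) is routine given the preliminaries on Banach densities.
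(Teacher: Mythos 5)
Your first direction is correct and is essentially the paper's own argument: fix neighbourhoods $U_k$ of the coordinates at pairwise distance bounded below by some $\varepsilon_0>0$, observe that every $g$ in the return set satisfies $\diam_m(gU)\geq\varepsilon_0$, and pass to a \Folner sequence realising the upper Banach density to obtain the averaged lower bound $\varepsilon_0\eta$, uniformly in $U$. No issues there.

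The converse is where the substance lies, and there your proposal stops short of a proof. You correctly name the obstacle (one essential tuple and one density bound must serve all open $U$) and the right ingredients (transitivity, compactness of $X^m_\varepsilon$, pigeonhole), but you never supply the mechanism, deferring it as ``the core of the argument''; the remark that the set of \emph{bad} tuples is closed and invariant and must contain an essential tuple is moreover garbled --- it is the good tuples you need, and no such soft argument suffices. What is actually required is the following construction. First, a Chebyshev-type step upgrades diam-mean $m$-sensitivity to: there is $\varepsilon>0$ such that for \emph{every} nonempty open $U$ the set $\{g\in G:\diam_m(gU)\geq\varepsilon\}$ has $\uBD\geq\varepsilon$, the density bound already uniform in $U$. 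Then, with nested balls $W_n=B(x,1/n)$ at a transitive point $x$, one covers $X^m_\varepsilon$ by finitely many sets of diameter at most $1$ and pigeonholes; since the $W_n$ are nested and the cover is finite, a \emph{single} cover element can be chosen that carries density at least $\varepsilon/N_1$ for \emph{all} $n$ simultaneously. Iterating with covers of diameter $1/l$ yields nested sets $A^l$ with bounds $\varepsilon/(N_1\cdots N_l)$ valid for every $n$, and $\bigl(\bigcap_l A^l\bigr)\cap X^m_\varepsilon$ is a single essential tuple. Given neighbourhoods $U_i$ of its coordinates, choosing $l$ with $A^l\cap X^m_\varepsilon\subseteq U_1\times\cdots\times U_m$ produces the single constant $\eta=\varepsilon/(N_1\cdots N_l)$ that the definition demands --- note that your restated ``key observation'' quietly drops this uniformity: positive density for each $U$ separately is weaker than one $\eta>0$ valid for all $U$ --- and translation invariance of $\uBD$ transfers the bound from $W_n$ to an arbitrary $U\supseteq hW_n$. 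Without the simultaneous-in-$n$ pigeonhole and the nested refinement, the limit tuple you extract may depend on $U$ (or on the basis element), which is precisely the failure mode you flagged but did not close.
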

\begin{proof}
First, we assume that $(x_1,\ldots,x_{m})$ is an essential weakly mean-sensitive
$m$-tuple. Let $\delta:=\frac{1}{2}\min_{1\le i< j\le m}d(x_i,x_j)$. We choose a
neighborhood $U_k$ of $x_k$ for each $k\in\{1,2\cdots,m\}$ such
that
\begin{equation}\label{eq1}
  \min_{1\le i< j\le m}d(U_i,U_j)>\delta.
\end{equation}
Then there exists $\eta>0$ such that for any open subset $U$ of
$X$, $$\uBD(\{g\in G: (gU)\cap U_k\neq\emptyset\text{ for }1\le k\le
m\})>\eta.$$ Let
\[
\mathcal{N}:=\{g\in G: (gU)\cap U_k\neq\emptyset\text{ for }1\le k\le m\}.
\]
Then it is easy to see that for any $g\in\mathcal{N}$, $\diam_m(g(U))>\delta.$
Thus,
\[
\sup_{\cF}\limsup_{n\to\infty}\frac{1}{|\cF_n|}\sum_{g\in
  \cF_n}\operatorname{Diam}_m(g(U))\ >\ \delta\eta\ .
\]
Therefore, $\xg$ is diam-mean $m$-sensitive with constant
$\varepsilon=\delta\eta$. \medskip

In order to prove the converse, we suppose that $\xg$ is a transitive diam-mean
$m$-sensitive \tds. Then one can show by elementary arguments that there exists
$\varepsilon>0$ such that for any nonempty open set $U$ of $X$, there exists a
subset $F$ of $G$ with $\uBD(F)\geq\varepsilon$ such that for any $g\in
F$,
\[\diam_m(g(U))\ \geq \ \varepsilon\ .\]
This implies that for any $k\in F$ there exist $x_1^k,\ldots,x_{m}^k\in U$ such
that
\begin{equation}\label{eq:123}
	\min_{1\le i< j\le m}d(kx^k_i,kx^k_j)\ge \varepsilon.
\end{equation}
	
Put
\[
X^{m}_\varepsilon\ := \ \{(x_1,\ldots,x_{m})\in X^{m}:\min_{1\le i< j\le
  m}d(x_i,x_j)\ge \varepsilon\}\ .
\]
Then $X^{m}_\varepsilon$ is a compact subset of $X^{m}$. Fix a transitive point
$x\in X$ and set $W_n:=B(x,1/n)$ for each $n\in\N$. By \eqref{eq:123}, for each
$n\in\N$,
\[
\uBD(\{g\in G:(gW_n)^{m}\cap X_\varepsilon^{m}\neq
\emptyset\})\ \geq\ \varepsilon\ .
\]
Due to the compactness of $X^{m}_\varepsilon$, there exists a finite family of open
sets $\{A_i^1\}_{i=1}^{N_1}$ with $\diam(A_i^1)\le 1$ such that
\[
X^{m}_\varepsilon\ \subset\ \cup_{i=1}^{N_1}A_i^1\ .
\]
By the pigeonhole principle, we may assume that for any $n\in\N$, there exists
$p_n\in\{1,2,\ldots,N_1\}$ such that
\[
\uBD(\{g\in G:(gW_n)^{m}\cap A_{p_n}^1\cap X_\varepsilon^{m}\neq
\emptyset\})\geq\varepsilon/N_1.
\]
Since the sets $W_n$ are decreasing and $N_1<\infty$, we may choose the same $p_n$ for all $n\in \N$. We can thus assume
without loss of generality that $p_n=1$ for all $n\in\N$.
		
Repeating the produce above, for each $l\ge1$, we may assume that there exists
an open subset $A^l$ with $\diam(A^l)<1/l$ such that $A^l\subset A^{l-1}$ and
for all $n\in\N$,
\begin{equation}\label{eq:20.44}
	\uBD\left(\left\{g\in G:(gW_n)^{m}\cap A^l\cap
        X_\varepsilon^{m}\neq \emptyset\right\}\right)\ \geq
        \ \frac{\varepsilon}{N_1N_2\ldots N_l} \ .
\end{equation}
	
As $\lim_{l\to\infty}\diam(A^l)=0$, it is clear that $(\cap_{l=1}^\infty
A^l)\cap X_\varepsilon^{m}$ is a singleton, denoted by $(x_1,\ldots,x_{m})$. Now we
prove this point is a weakly mean-sensitive tuple. Indeed, for any family of
open neighborhoods $U_i$ of $x_i$, $i=1,2,\ldots,m$, there exists $l>0$ such
that $A^l\cap X_\varepsilon^{m}\subset U_1\times\ldots\times U_{m}$.  By
\eqref{eq:20.44}, for all $n\in\N$,
\[
\uBD\left(\left\{g\in G:(gW_n)^{m}\cap (U_1\times\ldots\times
U_{m})\neq \emptyset\right\}\right)\ \geq \ \frac{\varepsilon}{N_1N_2\ldots N_l}\ .
\]
For any open subset $U$ of $X$, as $x\in X$ is a transitive point, there exists
$h\in G$ such that $hx\in U$. By the continuity of $h$, we can choose $n\in\N$
large enough such that $hW_n\subset U$. Then we have
\begin{align*}
	\uBD\left(\left\{g\in G:(gU)\cap U_i\neq \emptyset,\text{ for
        }i=1,2,\ldots,m\right\}\right) \ \geq \ \frac{\varepsilon}{N_1N_2\ldots
          N_l}\ .
\end{align*}
This completes the proof.
\end{proof}

In order to prove a converse to Theorem~\ref{thm:regular m to one=>m-equi}, we
need an additional condition that is required in order to apply results from
\cite{LiuXuZhang2025}. We say that a \tds $\xg$ satisfies the {\em local
  Bronstein condition} if for any minimal point $(x,x')\in X\times X$ such that
$\pi_{\eq}(x)=\pi_{\eq}(x')$ there exists a sequence $((x_n,x_n'))_{n\in\N}$ of
minimal points in $X\times X$ and a sequence $(g_n)_{n\in\N}$ in $G$ such that
$\lim_{n\to\infty} (x_n,x_n')=(x,x')$ and $\lim_{n\to\infty} d(g_nx,g_nx')=0$.
Note that this is always satisfied if $(X,G)$ is a minimal action of an abelian
group (Lemma~\ref{lem:nilpotentimpliesbronstein}) The following statement now
summarises our findings combined with the respective results in
\cite{LiuXuZhang2025}.

\begin{thm}\label{thm:m to 1<=> tuple}
Suppose that $(X,G)$ is a minimal \tds that satisfies the local Bronstein
condition. Let $m\in \N$. Then the following
statements are equivalent:
\begin{enumerate}[(i)]
	\item  $\pi_{\eq}$ is regular $m$-to-one;
	\item there exists an essential weakly $m$-sensitive tuple and no
          essential weakly $(m+1)$-sensitive tuple;
	\item for  every $y\in X_{\eq}$, $\pi_{\eq}^{-1}(y)$ contains an
          essential weakly $m$-sensitive tuple and no essential weakly
          $(m+1)$-sensitive tuples;
          \item $(X,G)$ is diam-mean $(m+1)$-equicontinuous but not
                  diam-mean $m$-equicontinuous;
	
		\item $(X,G)$ is not diam-mean $(m+1)$-sensitive, but diam-mean
                  $m$-sensitive.
\end{enumerate}
\end{thm}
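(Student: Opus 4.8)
The plan is to split the five conditions into two groups. The \emph{dynamical} conditions (iv), (v) and the tuple condition (ii) will be handled with results of the present paper, while the \emph{structural} conditions (i), (iii) are then attached to them by combining with \cite{LiuXuZhang2025}, whose arguments apply precisely because of the local Bronstein condition.

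I would first dispatch the equivalences among (ii), (iv) and (v). Since $(X,G)$ is minimal, Proposition~\ref{prop:auslanderyorkediammean} tells us that for each $k$ the system is either diam-mean $k$-equicontinuous or diam-mean $k$-sensitive, and never both; applied with $k=m$ and $k=m+1$ this converts (iv) into \enquote{not diam-mean $(m+1)$-sensitive but diam-mean $m$-sensitive}, which is exactly (v), so (iv) $\Leftrightarrow$ (v). Likewise Theorem~\ref{lem:wealy tuple vs diammean sensitive} --- applicable because minimality implies transitivity --- asserts for every $k$ that $(X,G)$ has an essential weakly mean-sensitive $k$-tuple if and only if it is diam-mean $k$-sensitive; taking $k=m$ and $k=m+1$ yields (ii) $\Leftrightarrow$ (v). Hence (ii), (iv) and (v) are mutually equivalent, and it remains to attach (i) and (iii).

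Next I would record the structural fact that underlies the equivalence with (iii): \emph{every essential weakly mean-sensitive $K$-tuple lies in a single fibre of $\pi_{\eq}$}. Indeed, positive upper Banach density forces non-emptiness, so such a tuple $(x_1,\dots,x_K)$ satisfies the hitting criterion for the regionally proximal $K$-relation recalled in Section~\ref{Prelim}, that is $(x_1,\dots,x_K)\in\cQ_K$; since the local Bronstein condition lets us apply Theorem~\ref{thm:auslandersmagic}, we obtain $(x_1,x_i)\in\cQ\subseteq\cQ^*$ for all $i$, whence $\pi_{\eq}(x_1)=\dots=\pi_{\eq}(x_K)$. This observation turns the second clause of (iii) into that of (ii), while the first clause of (iii) trivially implies that of (ii), so (iii) $\Rightarrow$ (ii). For (ii) $\Rightarrow$ (iii) one uses minimality: a given essential weakly mean-sensitive $m$-tuple is moved through the $G$-action --- both essentiality and weak mean sensitivity being $G$-invariant --- and one passes to a subsequential limit so that the $\pi_{\eq}$-image of the resulting tuple is any prescribed point of $X_{\eq}$, provided the limiting tuple does not degenerate; thus (ii) $\Leftrightarrow$ (iii).

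Finally, the link between the tuple conditions (ii), (iii) and the regularity condition (i) is what I would draw from \cite{LiuXuZhang2025}: under the local Bronstein condition their results show that $\pi_{\eq}$ is regular $m$-to-one if and only if every fibre contains an essential weakly mean-sensitive $m$-tuple but no essential weakly mean-sensitive $(m+1)$-tuple, \ie if and only if (iii) holds; combined with the second and third paragraphs this closes the chain. I also note that the forward implication (i) $\Rightarrow$ (iv) has the self-contained proof supplied by Theorem~\ref{thm:regular m to one=>m-equi} for its diam-mean $(m+1)$-equicontinuity part, the \enquote{not diam-mean $m$-equicontinuous} part then being forced through (ii) by the second paragraph. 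I expect the main obstacle to be the passage from one fibre to every fibre in the step (ii) $\Rightarrow$ (iii) --- verifying that weak mean sensitivity is preserved under the $G$-action and under subsequential limits, and that the limiting tuple remains essential --- together with matching the slightly different formulations of \cite{LiuXuZhang2025} (compare the footnotes on \emph{at most} versus \emph{exactly} $m$ preimages) to the statements here.
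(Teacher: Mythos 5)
Your handling of the dynamical side is fine and matches the paper's tools: (iv) $\Leftrightarrow$ (v) via Proposition~\ref{prop:auslanderyorkediammean}, and (ii) $\Leftrightarrow$ (v) via Theorem~\ref{lem:wealy tuple vs diammean sensitive} applied with $k=m$ and $k=m+1$. Your observation that every essential weakly mean-sensitive $K$-tuple lies in a single fibre of $\pi_{\eq}$ is also correct (and in fact you do not need Theorem~\ref{thm:auslandersmagic} for it: membership in $\cQ_K$ already gives $(x_i,x_j)\in\cQ$ by projecting to two coordinates), so (iii) $\Rightarrow$ (ii) is sound --- arguably more carefully justified than the paper's \enquote{trivial}.

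The genuine gap is your step (ii) $\Rightarrow$ (iii). Pushing one essential weakly mean-sensitive $m$-tuple around by $G$ and taking subsequential limits does reach every fibre of $X_{\eq}$ (by minimality of the \mef), and weak mean sensitivity does pass to limits and translates; but \emph{essentiality does not}. Precisely because such a tuple is regionally proximal, its orbit closure in $X^m$ necessarily accumulates on degenerate tuples, and nothing in your argument selects, for a prescribed fibre, a limit along which the $m$ points stay separated. You flag this yourself as the \enquote{main obstacle}, but it is not a routine verification --- it is the structural heart of the theorem, and without it your chain only yields (i) $\Rightarrow$ (iii) $\Rightarrow$ (ii) $\Leftrightarrow$ (iv) $\Leftrightarrow$ (v), never returning to (i). Relatedly, you assume \cite{LiuXuZhang2025} provides a full biconditional between (i) and (iii), which is stronger than what is actually invoked. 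The paper closes the loop differently and thereby avoids the degeneration problem: from \cite[Corollary 5.3]{LiuXuZhang2025}, the absence of essential weakly $(m+1)$-sensitive tuples already forces $\pi_{\eq}$ to be regular $m'$-to-one for some $0<m'\le m$; then the implication (i) $\Rightarrow$ (iii) (proved via \cite{LiuXuZhang2025} together with Theorem~\ref{thm:regular m to one=>m-equi} and the dichotomy of Proposition~\ref{prop:auslanderyorkediammean}), applied to $m'$, shows $m'\ge m$ because an essential weakly $m$-sensitive tuple exists, hence $m'=m$ and (i) holds. To repair your write-up, either adopt this (ii) $\Rightarrow$ (i) route or supply an actual argument that essential tuples can be found in every fibre, which is exactly the content you are currently assuming.
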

\begin{proof}
(i) $\Rightarrow$ (iii). The first statement has been proven in \cite[Corollary
    5.3]{LiuXuZhang2025} . Now, assume for a contradiction that there exists an
  essential weakly $(m+1)$-sensitive tuple. By Theorem~\ref{lem:wealy tuple vs
    diammean sensitive}, we know that $(X,G)$ is diam-mean
  $(m+1)$-sensitive. Hence Proposition \ref{prop:auslanderyorkediammean} implies
  that $(X,G)$ is not diam-mean $(m+1)$-equicontinuous. However, this
  contradicts Theorem \ref{thm:regular m to one=>m-equi}.
 
(iii) $\Rightarrow$ (ii) is trivial.

(ii) $\Rightarrow$ (i).  By Corollary 5.3 in \cite{LiuXuZhang2025}, as $(X,G)$
  has no essential weakly $(m+1)$-sensitive tuple, we know that $\pi_{\eq}$ is
  regular $m'$-to-one for some $0<m'\le m$. As $(X,G)$ has an essential weakly
  $m$-sensitive tuple, the fact that (i) $\Rightarrow$ (iii) yields $m'\geq
  m$. Therefore (i) holds.
  
(i) $\Rightarrow$ (iv) has been proven in Theorem \ref{thm:regular m to
  one=>m-equi}.

(iv) $\Leftrightarrow$ (v) holds by Proposition \ref{prop:auslanderyorkediammean}.

(v) $\Rightarrow$ (ii) finally follows from Theorem \ref{lem:wealy tuple vs
    diammean sensitive}.
   \end{proof}

\subsection{Strong $m$-spreading and mean sensitive tuples}
\label{Strong_m_spreading}

For the sake of completeness, we discuss in this section how strong
$m$-spreading may be characterised via suitable finite collections of weakly
mean sensitive tuples. However, due to the fact that the spreading must occur
along sets of full density, it turns out that this cannot be achieved by looking
at single tuples. Instead, finite collections of suitable tuples have to be
considered.

We call a \tds $\xg$ {\em strongly $(\varepsilon,m)$-spreading} for some
$\varepsilon>0$ and $m\in\N$ if
\[
\lBD\left(\left\{ g\in G\mid \diam_m(gU)
>\varepsilon\right\}\right) \ = \ 1 \
\]
holds for any open subset $U \subseteq X$. Note that $\xg$ is {\em strongly
  $m$-spreading} if it is strongly $(\varepsilon,m)$-spreading for some $\varepsilon
>0$. Recall that we defined
\[
D_m(x) \ = \ \min_{1\leq i < j\leq m} d(x_i,x_j) \ 
\]
and 
\[
X^m_{\varepsilon} \ = \ \left\{ x\in X^m\mid D_m(x) \geq \varepsilon\right\} .
\]
Now strong $m$-spreading can be characterised in the following way. 
\begin{lem}
  If $\xg$ is strongly $(\varepsilon,m)$-spreading, then for all $\delta>0$ there
  exists $N=N(\delta)\in\N$ and $x^{(1)},\ldots, x^{(N)}\in X^m_{\varepsilon}$ such
  that
  \begin{equation}\label{e.strong_spreading_tuples_1}
 \lBD\left(\left\{ g\in G \ \left|\ \exists k\in\{1,\ldots, N\} :
 gU\cap B\left(x^{(k)}_i,\delta\right)\neq \emptyset \textrm{ for } i=1,\ldots,
 m\right\}\right)\right. \ = \ 1 \ .
  \end{equation}
holds for all nonempty open sets $U\subseteq X$.
\end{lem}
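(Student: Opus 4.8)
The plan is to exploit the compactness of $X^m_{\varepsilon}$ to replace the diameter condition by a single finite covering, after which a set inclusion does all the work. The one point to keep an eye on is that the covering must be chosen for $X^m_{\varepsilon}$ once and for all, not for each $U$ separately; this is exactly what yields the claimed uniform $N=N(\delta)$.

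First I would fix $\delta>0$ and note that $X^m_{\varepsilon}=\{x\in X^m\mid D_m(x)\geq\varepsilon\}$ is a closed, hence compact, subset of $X^m$. For each $z=(z_1,\ldots,z_m)\in X^m_{\varepsilon}$ the product set $\prod_{i=1}^m B(z_i,\delta)$ is an open neighbourhood of $z$ in $X^m$, so as $z$ ranges over $X^m_{\varepsilon}$ these sets form an open cover of $X^m_{\varepsilon}$. By compactness there is a finite subcover, i.e. there exist $x^{(1)},\ldots,x^{(N)}\in X^m_{\varepsilon}$ with $N=N(\delta)\in\N$ such that
\[
X^m_{\varepsilon} \ \subseteq \ \bigcup_{k=1}^N\prod_{i=1}^m B\left(x^{(k)}_i,\delta\right).
\]
This choice depends only on $\delta$ (through $\varepsilon$, $m$ and the space $X$), and in particular not on any open set $U$.

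Next I would fix an arbitrary nonempty open $U\subseteq X$ and compare two subsets of $G$. If $g\in G$ satisfies $\diam_m(gU)>\varepsilon$, then by the definition of $\diam_m$ there is a tuple $(y_1,\ldots,y_m)\in(gU)^m$ with $D_m(y_1,\ldots,y_m)>\varepsilon$, so in particular $(y_1,\ldots,y_m)\in X^m_{\varepsilon}$. By the covering above there is an index $k\in\{1,\ldots,N\}$ with $y_i\in B(x^{(k)}_i,\delta)$ for every $i$, and since each $y_i$ also lies in $gU$ this gives $gU\cap B(x^{(k)}_i,\delta)\neq\emptyset$ for $i=1,\ldots,m$. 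Therefore
\[
\left\{g\in G\mid \diam_m(gU)>\varepsilon\right\} \ \subseteq \ \left\{g\in G \ \left|\ \exists\,k\in\{1,\ldots,N\}: gU\cap B\left(x^{(k)}_i,\delta\right)\neq\emptyset \text{ for } i=1,\ldots,m\right.\right\}.
\]

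Finally, since $\xg$ is strongly $(\varepsilon,m)$-spreading, the left-hand set has lower Banach density $1$; because $\lBD$ is monotone with respect to inclusion and always bounded above by $1$, the right-hand set has lower Banach density $1$ as well, which is exactly \eqref{e.strong_spreading_tuples_1}. The argument is essentially routine, so I do not anticipate a genuine obstacle: the only slightly delicate bookkeeping is ensuring the finite cover of $X^m_{\varepsilon}$ is selected before quantifying over $U$, which is precisely what makes $N$ a function of $\delta$ alone.
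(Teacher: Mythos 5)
Your proof is correct and follows essentially the same route as the paper: pass from $\diam_m(gU)>\varepsilon$ to $(gU)^m$ meeting the compact set $X^m_{\varepsilon}$, cover $X^m_{\varepsilon}$ by finitely many product $\delta$-neighbourhoods chosen independently of $U$, and conclude by monotonicity of $\lBD$. The only difference is that you spell out the set inclusion and the monotonicity step explicitly, which the paper leaves implicit.
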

\begin{proof}
  Note that if a set $V\subseteq X$ satisfies $\diam_m(V) >\varepsilon$, then $V^m$
  intersects $X^m_{\varepsilon}$. Hence, as $\xg$ is strongly
  $(\varepsilon,m)$-spreading, we have that
  \begin{equation} \label{e.strong_spreading_hyperspace}
  \lBD \left(\left\{ g\in G \mid g(U)^m\cap X^{m,\varepsilon}\neq
  \emptyset \right\}\right) \ = \ 1 \ .
  \end{equation}

  Now fix $\delta>0$. Due to compactness, there exists $N\in\N$ and
  $x^{(1)},\ldots, x^{(N)}\in X^m_{\varepsilon}$ such that the product
  neigbourhoods
  \[
  U\left(x^{(k)},\delta\right) \ = \ \bigotimes_{k=1}^m B\left(x^{(k)}_i,\delta\right)
  \]
  with $k=1,\ldots,N$ cover $X^m_{\varepsilon}$. As a consequence,
  (\ref{e.strong_spreading_hyperspace}) implies
  (\ref{e.strong_spreading_tuples_1}).
\end{proof}
Conversely, we have
\begin{lem}
  Suppose $\xg$ be a \tds and $\varepsilon>0$ and $\delta\in (0,\varepsilon/2)$. If
  there exists $N=N(\delta)\in\N$ and $x^{(1)},\ldots, x^{(N)}\in
  X^{m}_{\varepsilon}$ such that
  \[
   \lBD\left(\left\{ g\in G \ \left| \exists k\in\{1,\ldots, N\} :
   gU\cap B\left(x^{(k)}_i,\delta\right)\neq \emptyset \textrm{ for }
   i=1,\ldots, m\right\}\right)\right. \ = \ 1 \ ,
    \]
    holds for all nonempty open sets $U\subseteq X$, then $\xg$ is strongly
    $(\varepsilon-2\delta,m)$-spreading.
\end{lem}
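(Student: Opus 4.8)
The plan is to unwind the definitions and show that the covering hypothesis forces $\diam_m(gU) > \varepsilon - 2\delta$ for all $g$ in a set of full lower Banach density. First I would fix a nonempty open set $U \subseteq X$ and apply the hypothesis to obtain that the set
\[
S \ := \ \left\{ g\in G \ \middle|\ \exists k\in\{1,\ldots, N\} : gU\cap B\left(x^{(k)}_i,\delta\right)\neq \emptyset \textrm{ for } i=1,\ldots, m\right\}
\]
satisfies $\lBD(S) = 1$. The key pointwise claim is then that $S \subseteq \{g\in G\mid \diam_m(gU) > \varepsilon - 2\delta\}$, which immediately yields $\lBD(\{g\in G\mid \diam_m(gU) > \varepsilon-2\delta\}) = 1$, i.e.\ strong $(\varepsilon-2\delta,m)$-spreading.

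To verify the pointwise claim, I would fix $g\in S$ and the corresponding index $k$. For each $i\in\{1,\ldots,m\}$ pick a point $y_i \in gU\cap B(x^{(k)}_i,\delta)$; this is possible precisely because the intersection is nonempty. Then $(y_1,\ldots,y_m)\in (gU)^m$, so by definition of $\diam_m$ we have $\diam_m(gU) \geq D_m(y_1,\ldots,y_m) = \min_{1\le i<j\le m} d(y_i,y_j)$. Now for any $i\neq j$, the triangle inequality gives $d(x^{(k)}_i, x^{(k)}_j) \leq d(x^{(k)}_i, y_i) + d(y_i,y_j) + d(y_j, x^{(k)}_j) < \delta + d(y_i,y_j) + \delta$, hence $d(y_i,y_j) > d(x^{(k)}_i,x^{(k)}_j) - 2\delta \geq D_m(x^{(k)}) - 2\delta \geq \varepsilon - 2\delta$, where the last inequality uses $x^{(k)}\in X^m_\varepsilon$. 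Taking the minimum over $i\neq j$ gives $\diam_m(gU) \geq D_m(y_1,\ldots,y_m) > \varepsilon - 2\delta$, as desired. (The hypothesis $\delta < \varepsilon/2$ ensures $\varepsilon - 2\delta > 0$, so the resulting spreading constant is genuinely positive.)

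I do not expect any serious obstacle here: the argument is a routine triangle-inequality estimate combined with the observation that lower Banach density is monotone under set inclusion (if $S \subseteq S'$ then $\lBD(S) \leq \lBD(S')$, which follows directly from the definition as an infimum of liminfs of the ratios $|S\cap\cF_n|/|\cF_n|$). The only mild care needed is to make sure the index $k$ may depend on $g$ but that this causes no problem, since the estimate $\diam_m(gU) > \varepsilon - 2\delta$ holds uniformly regardless of which $k$ is used, all the $x^{(k)}$ lying in $X^m_\varepsilon$. This completes the proof.
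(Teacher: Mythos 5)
Your proof is correct and is exactly the paper's argument, just written out in detail: the paper's one-line proof rests on the same observation that $x^{(k)}\in X^m_{\varepsilon}$ together with $gU\cap B(x^{(k)}_i,\delta)\neq\emptyset$ for all $i$ forces $\diam_m(gU)>\varepsilon-2\delta$ via the triangle inequality, and then monotonicity of lower Banach density finishes it. No gaps.
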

\begin{proof}
  The statement follows directly from the observation that $x\in X^m_{\varepsilon}$
  and $V\cap B(x_i,\delta)\neq \emptyset$ for $i=1,\ldots,m$ implies $\diam_m(V)
  > \varepsilon-2\delta$.
\end{proof}

Together, the previous two lemmas yield
\begin{prop}\label{p.strong_spreading_via_tuples}
  Let $\xg$ be a \tds, $m\in\N$ and $\varepsilon_0>0$. Then the following are
  equivalent.
  \begin{itemize}
  \item[(i)] $\xg$ is strongly $(\varepsilon,m)$-spreading for all
    $\varepsilon<\varepsilon_0$;
  \item[(ii)] for all $\varepsilon < \varepsilon_0$ and $\delta>0$ there exist
    $N=N(\varepsilon,\delta)$ and $x^{(1)},\ldots, x^{(N)}\in X^m_{\varepsilon}$ such
    that
  \[
  \lBD\left(\left\{ g\in G \ \left| \exists k\in\{1,\ldots, N\} :
  gU\cap B\left(x^{(k)}_i,\delta\right)\neq \emptyset \textrm{ for } i=1,\ldots,
  m\right\}\right)\right. \ = \ 1 \ ,
    \]
    holds for all nonempty open sets $U\subseteq X$
  \end{itemize}
\end{prop}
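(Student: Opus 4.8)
The plan is to deduce the proposition directly from the two lemmas immediately preceding it; the only point requiring care is the bookkeeping between the spreading constant and the $2\delta$ loss that occurs when one passes from a finite cover of $X^m_{\varepsilon}$ to the spreading property itself. It is convenient to first record the trivial monotonicity observation that if $\xg$ is strongly $(\varepsilon',m)$-spreading and $0<\varepsilon\le\varepsilon'$, then it is strongly $(\varepsilon,m)$-spreading, since $\{g\in G\mid \diam_m(gU)>\varepsilon'\}\subseteq\{g\in G\mid \diam_m(gU)>\varepsilon\}$ and $\lBD$ is monotone in the set.

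For the implication (i) $\Rightarrow$ (ii), fix $\varepsilon<\varepsilon_0$ and $\delta>0$. By (i), $\xg$ is strongly $(\varepsilon,m)$-spreading, so the first of the two preceding lemmas applies verbatim and yields $N=N(\delta)\in\N$ and points $x^{(1)},\ldots,x^{(N)}\in X^m_{\varepsilon}$ for which the covering condition holds for every nonempty open $U\subseteq X$. Setting $N(\varepsilon,\delta):=N(\delta)$, this is precisely statement (ii).

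For the implication (ii) $\Rightarrow$ (i), fix $\varepsilon<\varepsilon_0$; we must show $\xg$ is strongly $(\varepsilon,m)$-spreading. Choose an intermediate value $\varepsilon'$ with $\varepsilon<\varepsilon'<\varepsilon_0$ (for instance $\varepsilon'=\tfrac12(\varepsilon+\varepsilon_0)$) and put $\delta:=\tfrac12(\varepsilon'-\varepsilon)$, so that $\delta\in(0,\varepsilon'/2)$ and $\varepsilon'-2\delta=\varepsilon$. Applying (ii) with the parameters $\varepsilon'$ and $\delta$ produces $N=N(\varepsilon',\delta)\in\N$ and points $x^{(1)},\ldots,x^{(N)}\in X^m_{\varepsilon'}$ satisfying the covering condition for all nonempty open $U\subseteq X$. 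The second of the two preceding lemmas, applied with $\varepsilon'$ in the role of $\varepsilon$, then gives that $\xg$ is strongly $(\varepsilon'-2\delta,m)=(\varepsilon,m)$-spreading, as required.

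I do not expect any genuine obstacle in this argument: both directions are immediate consequences of the two lemmas, and the only subtlety is that the cover appearing in (ii) lives in $X^m_{\varepsilon}$ with the \emph{same} $\varepsilon$ as the spreading constant, which is what forces the passage to an intermediate $\varepsilon'$ in the direction (ii) $\Rightarrow$ (i). One could alternatively bypass the introduction of $\varepsilon'$ by combining the monotonicity remark above with a direct application of the lemmas at parameter $\varepsilon$ and arbitrarily small $\delta$; either route completes the proof.
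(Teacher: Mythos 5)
Your proposal is correct and follows essentially the same route as the paper, which likewise obtains the proposition by combining the two preceding lemmas (the paper leaves the quantifier bookkeeping implicit, while you spell out the passage to an intermediate $\varepsilon'$ with $\varepsilon<\varepsilon'<\varepsilon_0$ and $\delta=\tfrac12(\varepsilon'-\varepsilon)$ so that $\varepsilon'-2\delta=\varepsilon$, which is exactly the point that makes the strict inequality $\varepsilon<\varepsilon_0$ necessary). No gaps.
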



\bibliographystyle{acm} 
\def\cprime{$'$} \def\cprime{$'$}

\end{document}